\author{Matthew Hyde, Michele Villa and Ivan Yuri Violo}
\title[Ricci curvature bounded below and uniform rectifiability]{Ricci curvature bounded below \\ and \\ uniform rectifiability}
\subjclass[2020]{ 53C23, 28A12}
\keywords{Uniform Rectifiability, metric spaces, Ricci curvature, Lipschitz functions}
\newcommand{\eps}{\varepsilon}
\newcommand{\rr}{\mathbb{R}}
\newcommand{\nn}{\mathbb{N}}
\newcommand{\nchi}{{\raise.3ex\hbox{$\chi$}}}
\newcommand{\sfd}{{\sf d}}
\renewcommand{\phi}{\varphi}
\newcommand{\restr}[1]{\lower3pt\hbox{$|_{#1}$}}
\newcommand{\X}{{\rm X}}
\definecolor{mygray}{gray}{0.9}
\newcommand{\diam}{\text{diam}}
\newcommand{\mea}{\mathfrak{m}}
\newcommand{\mm}{\mathfrak{m}}
\renewcommand{\d}{{\mathrm d}}
\newcommand{\supp}{\mathop{\rm supp}\nolimits}
\newcommand{\Xdm}{(\X,\sfd,\mm)}
\newcommand{\RCD}{\mathrm{RCD}}
\newcommand{\R}{\mathbb{R}}
\newcommand{\floor}[1]{\lfloor #1 \rfloor}
\newcommand{\rcd}{\mathrm{RCD}}
\newcommand{\Z}{{\rm Z}}
\newcommand{\Y}{{\rm Y}}
\renewcommand{\limsup}{\varlimsup}
\renewcommand{\tan}{{\rm Tan}}
\newcommand{\hau}{\mathcal{H}}
\newcommand{\dist}{{\sf dist}}
\theoremstyle{plain}
\newtheorem{theorem}{Theorem}[section]
\newtheorem{prop}[theorem]{Proposition}
\newtheorem{cor}[theorem]{Corollary}
\theoremstyle{definition}
\newtheorem{definition}[theorem]{Definition}
\newtheorem{remark}[theorem]{Remark}
\date{}
\address{Mathematics Institute \\
	Zeeman Building \\
	University of Warwick \\
	Coventry, CV4 7AL.}
\email{matthew.hyde@warwick.ac.uk}
\address{Mathematics Research Unit, University of Oulu (Oulu, Finland)
and Departament de Mat\'{e}matiques, Universitat Aut\`onoma de Barcelona (Barcelona, Catalunya).}
\email{michele.villa@oulu.fi, michele.villa@uab.cat}
\address{Centro di Ricerca Matematica Ennio De Giorgi, Scuola Normale Superiore  (Pisa , Italy).}
\email{ivan.violo@sns.it}
\thanks{
\noindent
 M.H. is supported by the European Union’s Horizon 2020 research and
innovation programme (Grant agreement No. 948021). M.V. is supported by the European Research Council (ERC) under the European Union's Horizon 2020 research and innovation programme (grant agreement 101018680, with PI X. Tolsa), and by the Academy of Finland via the project \emph{Higher dimensional Analyst's Traveling Salesman theorems and Dorronsoro estimates on non-smooth sets}, grant No. 347828. I.Y.V. was partially supported by the Academy of Finland project \textit{ Incidences on Fractals}, Grant No.321896.
\vspace{0.2cm}
}
\numberwithin{equation}{section}
\begin{document}

\begin{abstract}
	We prove that  Ahlfors-regular RCD spaces are uniformly rectifiable. The same is shown for  Ahlfors regular boundaries of  non-collapsed RCD spaces. As an application we deduce a type of quantitative differentiation  for Lipschitz functions on these spaces.
\end{abstract}

\maketitle


\section{Introduction}
The aim of this note is to provide some concrete examples of \textit{uniformly rectifiable} metric spaces. More precisely, we show that a vast class of RCD spaces, are uniformly rectifiable (UR).

\subsection{Uniform rectifiability} Uniform rectifiability is a quantitative strengthening of the qualitative property of being rectifiable. Recall that if $E \subset \R^n$ has finite $\mathcal{H}^k$ measure then one says that $E$ is $k$-rectifiable if there are Lipschitz maps $f_i: \R^k \to \R^n$, $i=1,2,...$ so that 
\begin{equation*}
    \mathcal{H}^k\left(E \setminus \bigcup_{i=1}^\infty f_i(\mathbb{R}^k)\right) = 0. 
\end{equation*}
A $k$-rectifiable set $E \subset \R^n$ looks like $\R^k$ asymptotically, but we cannot say anything at any definite scale. On the other hand, $E$ being uniformly $k$-rectifiable tells us that the scales at which $E$ is non-flat, that is, very far from looking like $\R^k$, are just a few. A different but equivalent way to put this is to say: if we look at a $k$-rectifiable set $E$ in a ball $B_r(x)$, $x\in E$, then we are guaranteed that $\mathcal{H}^k(B_r(x) \cap E \cap f(\R^k))>0$, where $f:\R^k \to \R^n$ is Lipschitz - but no more. If $E$ is $k$-UR, however, we know that $\mathcal{H}^k(B_r(x) \cap E \cap f(\R^k))\geq c r^k$, and $c$ is uniform in $x$ and $r$. These `quantifications' (that can be traced back to the landmark \cite{jones90} and \cite{david-semmes91, david-semmes93}) have had far reaching consequences, at least as far as sets and measures in Euclidean space are concerned: the solution of the Painlev\`{e} problem \cite{mmv, tolsa03}; the partial one of Vitushkin's conjecture \cite{david1998unrectictifiable, chang2020analytic, dkabrowski2022analytic}; the (also partial) solution of the David-Semmes problem on the boundedness of the Riesz transform \cite{ntv}; that of Bishop conjecture on harmonic measure \cite{ahmmmtv}; that of the Dirichlet \cite{ahmmt} and regularity \cite{mourgoglou2021regularity} problems in elliptic PDEs - all these rest upon the quantifications mentioned above. We refer the reader to P.\ Mattila's survey \cite[Section 6]{mattila2023rectifiability}. Very recently, the first author together with D.\ Bate and R.\ Schul \cite{BHS23}  generalised some aspects of this theory to metric spaces\footnote{We recall, however, that there is a vast and expanding literature about quantitative and qualitative rectifiability both for specific model spaces and for the general setting. Heisenberg groups and parabolic spaces are two of the most studied models. We refer to the introduction of \cite{fassler2023various} for a rather thorough review of the literature. It should be remarked that rectifiability in those contexts is understood in terms of intrinsic objects (e.g. \textit{intrinsic} Lipschitz graph) which are unrectifiable from the Euclidean point of view. Here, on the other hand, we are interested in to what extent a metric space looks \textit{Euclidean}. For one-dimensional metric spaces, the theory is rather well developed, see \cite{fassler2023various} and \cite{mattila2023rectifiability}.}. This note confirms that there are plenty of (relevant) UR metric spaces. 

\subsection{Spaces with lower bounds on the Ricci curvature}
The metric spaces which are the focus of this note are \textit{RCD spaces}. Roughly speaking, RCD spaces are a class of metric measure spaces (m.m.s) which have the defining property of having lower bounds on (a synthetic notion of) Ricci curvature. This condition originated in the study of Riemannian manifolds and from the fundamental question of how lower bounds on curvature (be it Ricci, sectional, scalar) affect their global and local geometry. Restricting now the discussion to manifolds with Ricci lower bounds, in order to study their local properties it comes natural to take sequences of such manifolds, in some appropriate sense, and study whatever limiting object (a \textit{Ricci limit}) is found. In this sense, it was observed by Gromov \cite{Gr99} that the family of $n$-dimensional Riemannian manifolds having Ricci curvature bounded below by $K\in \rr$ and diameter bounded above by $D<\infty$, is pre-compact in the Gromov-Hausdorff topology. The study of \textit{Ricci limits} went through a major development in a series of works by Cheeger and Colding in the nineties  \cite{CC1,CC2,CC3,CC96} (see also the survey \cite{wei2006manifolds}). By then, however, it was still unclear whether a notion of \textit{intrinsic} Ricci curvature lower bound could be defined, so as to impose it on a general m.m.s.\ \textit{a priori} - without having to rely on the lower bound of limiting sequences of Riemannian manifolds (see \cite[Appendix 2]{CC2}). To this end, Lott-Villani and Sturm \cite{Sturm06-2,Sturm2,LV09} independently introduced the so called \textit{Curvature Dimension} (CD) condition, which is, to all effect, a satisfactory synthetic notion of Ricci curvature lower bound in the setting of metric measure spaces. The CD condition is compatible with the smooth case, i.e.\ it coincides with the classical definition in the case of Riemannian manifolds and it is strong enough to obtain \textit{interesting} theorems. It is also sufficiently weak to be \textit{stable} under (measure) Gromov Hausdorff convergence and, in particular, includes Ricci limits. On the other hand, it is satisfied by spaces which are quite far from being Riemannian, for example Finsler manifolds such as $(\rr^n,\|\cdot \|,\hau^n)$ for any norm $\|\cdot \|$. In the last decade, a stronger condition has become rather prominent and much studied - the so called \textit{Riemannian Curvature Dimension} (RCD) condition (see the surveys \cite{Aicm,  G23} for more details and historical notes). Notable examples of RCD spaces are weighted Riemannian manifolds, Ricci limits, Alexandrov spaces with (sectional) curvature bounded below \cite{Pet11,ZZ10} and stratified spaces \cite{BKM21}. Non-Riemannian Finsler spaces, on the other hand, are ruled out by the RCD condition. Nonetheless, we stress that RCD spaces (and even Ricci limits) are singular spaces, for instance they are not necessarily locally Euclidean, can have both non-unique and non Euclidean tangent spaces (see e.g.\ \cite{CoNa13}) and admit conical singularities \cite{Ket15}. The standard notation for the class of these spaces is $\RCD(K, N)$, where $K \in \rr$ represents the (synthetic notion of) Ricci lower bound, while $N \in [1,\infty]$ is the upper bound for the dimension.

Besides the conceptual importance to develop a theory of Ricci curvature bounded below in the non-smooth setting,  RCD spaces have proven useful and even necessary in answering questions about smooth Riemannian manifolds. For example, they have been used to show existence of isoperimetric sets and  sharp concavity estimates of the isoperimetric profile in non-compact Riemannian manifolds \cite{APPS22,ABFP22}, to prove stability of sharp Sobolev inequalities under non-negative Ricci curvature and Euclidean volume growth \cite{NV22}, to show lack of uniform $C^1$-estimates for harmonic functions assuming only Ricci or sectional lower bounds \cite{DPZ23}. {Furthermore, several new almost-rigidity results for functional and geometric inequalities in Riemannian manifolds have been obtained by means of the RCD theory (see, e.g., \cite{CavM17,MonSe20,honda2023sharp,KETTERER2023113202}).}
We refer to \cite[Section 7]{G23} for more details and examples.  Notably RCD spaces found application also in the gravitational fields theory from physics, where they have been exploited to obtain  eigenvalues bounds in some singular weighted manifolds \cite{Phy1,Phy2}.

The rectifiability of RCD spaces was proved in \cite{MN19}. This was further developed in the independent works \cite{GP21,KM18,DPMR17}, where it was shown that RCD spaces are rectifiable as metric measure spaces - that is to say, the reference measure is absolutely continuous with respect to the appropriate Hausdorff measure. In fact, it follows from \cite{MN19} that RCD spaces are \textit{strongly rectifiable}, in the sense that they can be covered up to a measure zero set by $(1+\eps)$-biLipschitz images of the Euclidean space, where $\eps>0$ can be taken arbitrarily small.

In our main result  we show that in the case of bounded Ahlfors regular RCD spaces, rectifiability can be upgraded to uniform rectifiability (see Definition \ref{def:UR}). Roughly speaking, this says (using Remark \ref{rem-li-bilip} below) that any ball of an Ahlfors regular RCD space has a large portion which is bi-Lipschitz equivalent to a subset of the Euclidean space.
\begin{theorem}\label{thm:rcd ur bounded}
    Every bounded Ahlfors $k$-regular $\rcd(K,N)$ space, $N<\infty,$ is uniformly $k$-rectifiable. In particular any bounded non-collapsed $\rcd(K,N)$ space is uniformly $N$-rectifiable.
\end{theorem}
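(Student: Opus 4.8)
The plan is to verify, for the Ahlfors $k$-regular metric space $(\X,\sfd)$, one of the characterizations of uniform $k$-rectifiability established in \cite{BHS23}: concretely, a Carleson packing estimate for the Christ--David dyadic cubes $Q\subset\X$ at which the rescaled ball $(B_{C\ell(Q)}(x_Q),\ell(Q)^{-1}\sfd)$ fails to be $\eps$-Gromov--Hausdorff close to the unit ball of $\mathbb{R}^k$ --- a ``bilateral weak geometric lemma'' --- which by \cite{BHS23} is equivalent to uniform $k$-rectifiability (and hence, via Remark~\ref{rem-li-bilip}, yields the ``large bi-Lipschitz piece in every ball'' statement). Before doing so I would reduce to the non-collapsed case. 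Ahlfors $k$-regularity forces the essential dimension of $\X$ to equal $k$ and the density of $\mm$ with respect to $\hau^k$ to be bounded above and below; modulo a reduction I will come back to, this lets one replace $\mm$ by $\hau^k$ and work inside a bounded $\ncrcd(K',k)$ space. The ``In particular'' clause is then automatic, since a bounded $\ncrcd(K,N)$ space is itself Ahlfors $N$-regular: Bishop--Gromov gives $\hau^N(B_r(x))\le v_{K,N}(r)\lesssim r^N$ ($v_{K,N}$ the model-ball volume), and the relative volume comparison applied with $\X=B_{\diam\X}(x)$ gives the matching lower bound.

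For the core estimate, set $\vartheta_r(x):=\hau^k(B_r(x))/v_{K',k}(r)$ for $x\in\X$ and $0<r\le\diam\X$. By Bishop--Gromov this is monotone, with $\vartheta_r(x)\uparrow\vartheta(x)\le1$ as $r\downarrow0$, and $\vartheta(x)=1$ for $\hau^k$-a.e.\ $x$. The quantitative rigidity input --- the $\rcd$ version, due to De~Philippis--Gigli, of the Cheeger--Colding ``almost maximal volume implies almost Euclidean ball'' theorem --- says that if $\vartheta_r(x)\ge1-\delta$ then $(B_{r/2}(x),r^{-1}\sfd)$ is $\Psi(\delta\mid K',k)$-close to $B_{1/2}(0)\subset\mathbb{R}^k$ in the Gromov--Hausdorff sense. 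Choosing $\delta$ so that $\Psi(\delta)<\eps$, a cube $Q$ can be ``bad'' only if its centre lies in the effective singular set $\{y:\vartheta_{c\ell(Q)}(y)<1-\delta\}$; a routine layer-cake / Fubini manipulation then shows that the desired packing $\sum_{Q\subseteq R,\ Q\text{ bad}}\hau^k(Q)\lesssim\hau^k(R)$ reduces to the scale-uniform bound
\begin{equation*}
\int_0^1\hau^k\big(\{y\in B_1(z):\vartheta_s(y)<1-\delta\}\big)\,\frac{ds}{s}\ \le\ C(K',k,\delta),\qquad B_1(z)\subseteq\X .
\end{equation*}
The set in the integrand sits inside the $\eta(\delta)$-effective top singular stratum of $\X$ in $B_1(z)$, so this is exactly a quantitative-stratification statement: the Minkowski-type volume bounds for the quantitative singular strata of $\ncrcd$ spaces (Cheeger--Naber and Naber--Valtorta type, available in the $\rcd$ setting) give $\hau^k(\{y\in B_1(z):\vartheta_s(y)<1-\delta\})\le C(\delta,K',k)\,s^{\gamma}$ for some $\gamma>0$ ($\gamma=1$ in the worst, codimension-one case, e.g.\ near a boundary), which is integrable against $ds/s$. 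Feeding this back through the dyadic sum yields the bilateral weak geometric lemma, and \cite{BHS23} then gives uniform $k$-rectifiability.

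I expect two steps to carry the real difficulty. The first is the reduction to the non-collapsed setting: an Ahlfors $k$-regular $\rcd(K,N)$ space need not have reference measure a constant multiple of $\hau^k$ (for instance a smooth but rapidly oscillating density on a flat torus is still $\rcd(K,N)$ and Ahlfors regular), so one cannot literally appeal to the $\ncrcd$ structure theory; one must argue that the \emph{metric} space nonetheless carries $\hau^k$ as a non-collapsed reference measure, or else run the whole argument in a measure-insensitive way (e.g.\ through the existence, on a Carleson-prevalent set of pairs $(x,r)$, of $\delta$-splitting maps $B_r(x)\to\mathbb{R}^k$). The second is the quantitative-stratification estimate displayed above, in the precise form needed --- uniform in the base ball, for the \emph{top} stratum, and phrased through Gromov--Hausdorff proximity to $\mathbb{R}^k$ rather than through splitting maps: soft compactness shows that the effective singular set shrinks as $s\to0$ but gives no rate, whereas a rate is essential for integrability against $ds/s$. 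Everything else --- the reduction of the Carleson packing to the displayed integral, the passage from that packing to the formulation of uniform rectifiability used in \cite{BHS23}, and, if one prefers to exhibit ``big pieces of Lipschitz images of $\mathbb{R}^k$'' directly, the Reifenberg-type parametrization of the regions on which $\vartheta$ stays close to $1$ across a full range of scales --- should be comparatively routine.
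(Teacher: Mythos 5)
Your route (Euclidean BWGL via volume almost-rigidity and quantitative stratification, then the BWGL\,$\Rightarrow$\,UR direction of \cite{BHS23}) is genuinely different from the paper's, but it has a real gap exactly where you yourself flag it: the reduction to the non-collapsed case. From Ahlfors $k$-regularity you do get $\mm\sim\hau^k$ with bounded density, but there is no theorem allowing you to ``replace $\mm$ by $\hau^k$'' and land in an $\ncrcd(K',k)$ space: the RCD condition is not stable under changing the reference measure to a comparable one, the results of \cite{H20,BGHZ23} quoted in the paper apply only when the reference measure is already $\hau^n$, and in general the dimension parameter cannot be lowered to the essential dimension (think of a weighted circle or segment with a non-constant bounded density, which is Ahlfors $1$-regular and $\rcd(K,N)$ for some $N>1$ but not $\rcd(K',1)$). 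Such genuinely weighted, Ahlfors $k$-regular spaces with $k<N$ are covered by the theorem, and for them every quantitative input of your core estimate disappears: the Bishop--Gromov monotone quantity is $\mm(B_r)/v_{K,N}(r)$, which is not pinned at $1$ (indeed it blows up as $r\to0$ when $k<N$), the De Philippis--Gigli ``almost maximal volume $\Rightarrow$ almost Euclidean'' rigidity is a statement about $\hau^N$ on non-collapsed spaces, and the Cheeger--Naber/Antonelli--Bru\`e--Semola type volume bounds for quantitative strata are likewise proved for non-collapsed spaces. Your fallback (``run the argument in a measure-insensitive way via $\delta$-splitting maps on a Carleson-prevalent set of pairs'') is not an argument as stated; making it one is essentially the content of the paper's proof. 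So as written the proposal proves at best the ``in particular'' clause, not the general statement.

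For comparison, the paper never reduces to $\hau^k$ and never needs a rate: a compactness/contradiction argument (Proposition \ref{prop:jonas lemma}), using only the Mondino--Naber structure theorem for general (weighted) $\rcd(K,N)$ spaces, produces inside every ball an almost-flat sub-ball of definite comparable radius; then $\delta$-splitting maps (Proposition \ref{prop:flat implies bilip}) give a $(1+\eps)$-biLipschitz piece of definite measure, which verifies BPLI/BPBI directly --- one good piece per ball suffices, so no Carleson packing, no quantitative stratification, and no appeal to the hard BWGL\,$\Rightarrow$\,UR direction of \cite{BHS23} is needed (the paper's own BWGL, Proposition \ref{prop:BWGL}, is then deduced from these propositions plus John--Nirenberg, again without stratification). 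In the non-collapsed case your scheme is plausible --- note that by Bishop--Gromov monotonicity the single-scale bad set $\{\vartheta_s<1-\delta\}$ coincides with the ``bad at all scales in $[s,1]$'' set, so it does sit inside the top quantitative stratum and the $s^{1-\eta}$-type volume bound applies --- but it trades the paper's soft argument for substantially heavier machinery, and you would still need to supply precise references for $\hau^N$-volume $\eps$-regularity and quantitative stratification in the RCD setting, neither of which is needed in the paper's approach.
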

\textit{Non-collapsed} $\rcd(K,N)$ spaces are the ones  having the Hausdorff measure $\hau^N$ as reference measure (see Section \ref{sec:pre}). Note that Ahlfors regularity is part of the definition of UR, hence it is a not a restrictive assumption. We recall also that there are non-Ahlfors regular $\rcd(K,N)$ spaces, e.g.\  the  $\rcd(0,N)$ space $([0,1], t^{N-1}\d t, \sfd_{Eucl})$.

Concerning the unbounded case we can prove  uniform rectifiability under non-negative Ricci curvature lower bound, while in the general case we can still obtain a local version of Theorem \ref{thm:rcd ur bounded}, stated below. Unbounded RCD spaces are of considerable interest  and are often studied as they naturally appears  for example when taking blow-ups of RCD spaces or  blow-downs of Riemannian manifolds, therefore in this setting it is relevant to consider uniform rectifiability also in the unbounded case.  
\begin{theorem}\label{thm:rcd ur local}
A locally Ahlfors $k$-regular $\rcd(K,N)$ space is locally uniformly $k$-rectifiable. Additionally any  Ahlfors $k$-regular $\rcd(0,N)$ space is uniformly $k$-rectifiable. 
\end{theorem}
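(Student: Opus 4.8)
The plan is to obtain Theorem~\ref{thm:rcd ur local} from the \emph{method of proof} of Theorem~\ref{thm:rcd ur bounded} together with a rescaling argument, exploiting that both ingredients of the hypothesis behave well under the map $(\X,\sfd,\mm)\mapsto(\X,\lambda\sfd,\lambda^{-k}\mm)$: the space becomes $\rcd(\lambda^{-2}K,N)$, while Ahlfors $k$-regularity is preserved with the \emph{same} constants (as it is under restriction to balls). The one genuinely new feature compared with Theorem~\ref{thm:rcd ur bounded} is that there is no global top scale to exploit, so I would first isolate from the bounded case the fact that the mechanism producing uniform rectifiability there --- presumably, the production at every scale and location of a big bi-Lipschitz piece of the ball (via $\delta$-splitting maps in the spirit of Mondino--Naber) assembled through a corona-type decomposition --- is \emph{local} and \emph{diameter-independent}: the Carleson/corona estimate controlling a ball $B_r(x)$ depends only on $K,N,k$ and on the Ahlfors constants in a fixed dilate of $B_r(x)$, and boundedness in Theorem~\ref{thm:rcd ur bounded} enters only to sum these local estimates into a global one.

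For the first assertion I would fix a relatively compact open set $\Omega\Subset\X$. A routine covering argument, using the continuity of $(x,\rho)\mapsto\mm(B_\rho(x))$ and the compactness of $\overline\Omega$, upgrades local Ahlfors $k$-regularity (whose constants may a priori depend on the point) to Ahlfors $k$-regularity with a \emph{single} pair of constants $c_0,C_0$, valid on a neighbourhood of $\overline\Omega$ up to some scale $r_1>0$; set $R_0:=r_1/10$. Given $x\in\overline\Omega$ and $r\leq R_0$, I would pass to the rescaled pointed space $\mathbb X_{x,r}:=(\X,r^{-1}\sfd,r^{-k}\mm,x)$, which is $\rcd(r^2K,N)$, hence $\rcd(-R_0^2|K|,N)$, and Ahlfors $k$-regular with the constants $c_0,C_0$ on $B_{10}(x)$; then run the localised, diameter-independent form of the argument behind Theorem~\ref{thm:rcd ur bounded} inside the unit ball of $\mathbb X_{x,r}$. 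Undoing the rescaling, this yields the uniform-rectifiability datum of $B_r(x)\subset\X$ with a constant depending only on $K,N,k,c_0,C_0,R_0$, and since this bound is uniform over all $x\in\overline\Omega$ and all $r\leq R_0$, the space is locally uniformly $k$-rectifiable.

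For the second assertion I would specialise to $K=0$ and add a scaling symmetry. When $K=0$ the rescaled space $\mathbb X_{x,r}$ is again $\rcd(0,N)$ with \emph{no} loss in the curvature parameter, so the threshold $R_0$ becomes irrelevant and may be taken arbitrarily large; and since here $\X$ is globally --- not merely locally --- Ahlfors $k$-regular, the constants $c_0,C_0$ are uniform over all centres. Hence the uniform-rectifiability datum of $B_r(x)$ is bounded by one constant for \emph{all} $x\in\X$ and \emph{all} $r>0$, which is exactly global uniform $k$-rectifiability.

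The hard part will be the claim invoked at the outset: that the proof of Theorem~\ref{thm:rcd ur bounded} localises and is diameter-independent. I would have to check that each ingredient --- producing, on a definite fraction of every ball, a $\delta$-splitting map that is $(1+\eps)$-bi-Lipschitz on a set of definite measure, together with the stopping-time bookkeeping packaging these into a Carleson estimate --- uses only the $\rcd$ condition and the Ahlfors bounds in a bounded neighbourhood of the ball, with no recourse to a maximal scale. I note that the obvious black-box alternative --- assume the local statement fails along a sequence $(x_i,r_i)$, rescale, and pass to a pointed measured Gromov--Hausdorff limit --- stumbles on precisely this point, since such a limit need be neither bounded nor \emph{globally} Ahlfors regular, so Theorem~\ref{thm:rcd ur bounded} cannot be applied to it directly; this is why the localised form of its proof, rather than its statement, is what one really needs. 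The remaining steps --- the compactness upgrade of Ahlfors regularity on $\overline\Omega$ and the tracking of constants through the rescaling --- are routine.
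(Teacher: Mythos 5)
Your proposal is essentially the paper's own argument: the ``localized, diameter-independent mechanism'' you postulate is exactly Proposition \ref{prop:explicit} (a big $(1+\eps)$-biLipschitz piece in every ball of radius $r<\sqrt{\delta/|K|}$, with constants depending only on $N$, $\eps$ and the Ahlfors constant, obtained from Propositions \ref{prop:jonas lemma} and \ref{prop:flat implies bilip} via splitting maps, and with no corona/Carleson packaging needed since UR here is defined through big pieces of Lipschitz images), and your rescaling bookkeeping together with the observation that $K=0$ removes the scale threshold is precisely the paper's proof of Theorem \ref{thm:rcd ur local}. The only structural difference is that the paper's logical order is the reverse of yours --- Theorem \ref{thm:rcd ur local} is proved first from that scale-invariant local proposition and the bounded Theorem \ref{thm:rcd ur bounded} is then deduced from it --- so the ``hard part'' you defer to checking is exactly what the paper establishes as its main propositions, and it does localize as you anticipate.
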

We do not expect in general (global) uniform rectifiability to hold, indeed typically for $K$ negative the constants in most functional and geometric  inequalities   degenerate at large scales, e.g.\ in the Poincaré inequality and Bishop-Gromov volume comparison \cite{Sturm06-2,Rajala12-2}. 

As a second result we obtain  the uniform rectifiability of the boundary $\partial \X$ of non-collapsed $\RCD(K,N)$ spaces  which, roughly speaking, is  the closure of the points having the half space $\rr^N_+$ as a tangent (see Section \ref{sec:pre} for details). 
\begin{theorem}\label{thm:UR boundary}
Let $(\X,\sfd,\hau^N)$ be an $\rcd(-(N-1),N)$ space such that the boundary $\partial \X$ is bounded (resp.\ unbounded) and locally $(N-1)$-Ahlfors regular. Then $\partial \X$ is   uniformly $(N-1)$-rectifiable  (resp.\ locally uniformly $(N-1)$-rectifiable).
\end{theorem}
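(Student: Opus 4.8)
The plan is to reduce Theorem \ref{thm:UR boundary} to the already-stated Theorems \ref{thm:rcd ur bounded} and \ref{thm:rcd ur local}, applied not to $\X$ itself but to the boundary $\partial\X$ equipped with the restricted distance and the measure $\hau^{N-1}\restr{\partial\X}$. The key structural input is the deep fact (from the theory of non-collapsed $\rcd$ spaces, as developed by De Philippis--Gigli, Kapovitch--Mondino, and Bruè--Naber--Semola) that the boundary $\partial\X$ of an $\rcd(-(N-1),N)$ space is itself an $\rcd$-type object of dimension $N-1$, or at least is $(N-1)$-rectifiable with tangents $\rr^{N-1}$ at $\hau^{N-1}$-a.e.\ point. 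So the first step is to recall/cite precisely which structural theorem about $\partial\X$ we are using: ideally that $(\partial\X, \sfd, \hau^{N-1}\restr{\partial\X})$, or suitable neighborhoods of it, is again (locally) an $\rcd(K',N-1)$ space, whence the boundary's uniform rectifiability follows verbatim from the bounded/unbounded statements above. If such a clean $\rcd$-stability statement for the boundary is not available, the fallback is to verify directly that the two ingredients in the proof of Theorem \ref{thm:rcd ur bounded} transfer to $\partial\X$: namely (i) a uniform Reifenberg-flatness / Hausdorff-approximation statement at small scales on $\partial\X$, coming from the fact that $\partial\X$ has $\rr^{N-1}$ as its unique tangent at a.e.\ point and from the quantitative volume rigidity for the boundary cone, and (ii) a Carleson-measure / summable-jumps estimate controlling how often $\partial\X$ fails to be close to a copy of $\rr^{N-1}$ inside balls.

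Concretely, the steps I would carry out are: (1) State the hypotheses, invoke the local $(N-1)$-Ahlfors regularity assumed in the theorem, and fix a ball $B_r(x)$ with $x\in\partial\X$, $r\le \diam(\partial\X)$ (or $r\le 1$ in the local/unbounded case). (2) Quote the relevant boundary-regularity results for non-collapsed $\rcd$ spaces: $\partial\X$ is closed, $\hau^{N-1}$-rectifiable, and the set of points where the tangent is the half-space $\rr^N_+$ has full $\hau^{N-1}$-measure in $\partial\X$; moreover the regular set of $\partial\X$ carries an $\rcd(-(N-2),N-1)$ structure (or: $\partial\X$ arises as a metric-measure Gromov--Hausdorff limit of $(N-1)$-dimensional boundaries, hence is $\rcd$). (3) Feed this into Theorem \ref{thm:rcd ur bounded} (bounded case) or Theorem \ref{thm:rcd ur local} (unbounded/local case) to conclude that $\partial\X$ is uniformly $(N-1)$-rectifiable, respectively locally uniformly $(N-1)$-rectifiable, taking care that the Ahlfors-regularity constants and the curvature-dimension parameters feeding into the UR constants are the ones supplied by hypothesis and by the boundary structure theorem. (4) If the direct $\rcd$-structure on $\partial\X$ is only known on the interior of the regular boundary, patch the estimate on the (relatively) singular boundary, which has lower Hausdorff dimension, using the Ahlfors regularity hypothesis to control its contribution to the Carleson sum.

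The main obstacle I anticipate is step (2): whether one can legitimately treat $(\partial\X,\sfd,\hau^{N-1})$ as an (almost everywhere, or locally) $\rcd(K',N-1)$ space, or whether instead one must re-run the Reifenberg/Carleson machinery of Theorem \ref{thm:rcd ur bounded} intrinsically on $\partial\X$. The subtlety is that the distance on $\partial\X$ is the restriction of $\sfd$, which need not be a length metric, and the intrinsic metric on the boundary may differ; one needs that $\hau^{N-1}$-a.e.\ point of $\partial\X$ has a neighborhood where the two comparisons are quantitatively controlled so that tangents computed in $\X$ and in $\partial\X$ agree with $\rr^{N-1}$. Granting that (which is exactly what the boundary structure theory for non-collapsed $\rcd$ spaces provides), the rest is a direct application of the earlier theorems, with only bookkeeping of constants; if it fails, the fallback in the previous paragraph — verifying the two UR ingredients directly on $\partial\X$ using boundary volume rigidity and the $\eps$-regularity theorems near boundary points — is the route, and there the hard part becomes the quantitative (as opposed to qualitative) Reifenberg approximation of $\partial\X$ by $\rr^{N-1}$, i.e.\ an effective rate in the convergence to the half-space tangent.
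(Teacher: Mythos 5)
Your primary route does not work as stated: it hinges on treating $(\partial \X,\sfd,\hau^{N-1})$ as a (locally) $\rcd(K',N-1)$ space, so that Theorems \ref{thm:rcd ur bounded} and \ref{thm:rcd ur local} apply verbatim. No such structural theorem is available; that the boundary of a non-collapsed $\rcd(K,N)$ space inherits a synthetic Ricci lower bound (even with respect to its intrinsic metric, let alone the restricted one) is an open conjecture, not a citable result. What \cite{BNS22} provides is weaker: $\partial\X$ is closed, $(N-1)$-rectifiable, locally upper Ahlfors regular, and points of $\mathcal S^{N-1}\setminus\mathcal S^{N-2}$ have the half-space $\rr^N_+$ as a tangent \eqref{eq:half space tangent}. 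None of this lets you invoke the earlier theorems, whose proofs use harmonic $\delta$-splitting maps on the ambient $\rcd$ structure. So step (2) of your plan, in its ``clean'' form, is a genuine gap rather than a bookkeeping issue.

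Your fallback gestures in the right direction but does not supply the two quantitative ingredients that actually make the argument run, and it frames the problem as a Reifenberg/Carleson estimate, which is not needed (big pieces of biLipschitz images suffice and are what the paper produces). The actual mechanism is: (a) a compactness/contradiction argument (Proposition \ref{prop:boundary jonas lemma}) showing that every unit ball centered on $\partial\X$ contains a ball of definite size $r>\eta$ that is $\eps r$-GH close to $B_r^{\rr^N_+}(0^N)$; here the hypothesis $\hau^{N-1}(\partial\X\cap B_{1/4}(x))\ge v$ is essential to guarantee, via upper semicontinuity of Hausdorff content under Hausdorff convergence, lower semicontinuity of the density, stability of non-collapsed spaces \cite{DPG18}, and the dimension bound \eqref{eq:sk dim bound}, that boundary points of type $\mathcal S^{N-1}\setminus\mathcal S^{N-2}$ survive in the limit space — this is exactly the ``effective rate'' you flag as the hard part, and it is obtained non-effectively by compactness, not by a Reifenberg scheme; and (b) the quantitative boundary $\eps$-regularity of \cite[Theorem 8.4, Corollary 8.7]{BNS22} (Proposition \ref{prop:boundary implies bilip}), which says that inside an almost-half-space ball a subset of $\partial\X$ of at least half the boundary measure is $(1+\eps)$-biLipschitz equivalent to a subset of $\rr^{N-1}$. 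Combining (a), (b), the assumed local Ahlfors regularity of $\partial\X$, and Bishop--Gromov then gives locally BPBI of $\rr^{N-1}$ directly. Without identifying (b) — or some substitute producing Lipschitz charts on a definite fraction of the boundary measure — your sketch cannot be completed, and your concern about the restricted versus intrinsic metric on $\partial\X$ is dissolved rather than resolved by this route, since all GH comparisons are made for ambient balls and the biLipschitz charts of \cite{BNS22} are already stated for the restricted distance.
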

The previous result heavily relies on the regularity results for the boundary obtained recently in  \cite{BNS22} where, among other things, it is shown that $\partial \X$ is $(N-1)$-rectifiable. 
It was also conjectured in \cite{BNS22} and proved there in the case of Ricci limits and Alexandrov spaces with curvature bounded below, that $\partial \X$ is in fact always locally $(N-1)$-Ahlfors regular, a property  which  could then be removed  as an assumption  from  Theorem \ref{thm:UR boundary} (see also Remark \ref{rmk:boundary conj}).

In Section \ref{s:quant-diff}, we deduce a statement about quantitative differentiation of Lipschitz functions on bounded RCD spaces i.e.\ we obtain information about how well Lipschitz functions are approximated at coarse scales, not just infinitesimally. We show, for any given Lipschitz $f \colon X \to \R,$ that \textit{most} balls in $X$ are GH-close to a Euclidean ball of the same radius and that $f$ is well-approximated by an affine function on the corresponding tangent space (see Corollary \ref{c:quant-diff}). This is similar in spirit to a result of Jones \cite{jones1988lipschitz} for Lipschitz functions defined on $\R^n.$ A different notion of quantitative differentiability on spaces admitting a Poincar\'e inequality (in particular, RCD spaces) has also been investigated in \cite{cheeger2012quantitative}.

\section{Preliminaries}\label{sec:pre}
A \textit{metric measure space} (m.m.s.) is a triple $\Xdm$, where $(\X,\sfd)$ is a complete and separable metric space and $\mm$ is a positive Borel measure finite on bounded set,  called \textit{reference measure}. We will always assume that $\supp(\mm)=\X.$  We will denote by $\hau^{\alpha}$ and $\hau^{\alpha}_\infty$ the $\alpha$-dimensional Hausdorff measure and Hausdorff content in $(\X,\sfd)$. For all $x\in \X$ and $r>0$ we also set $B_r^{\X}(x)\coloneqq \{y \in \X \ : \ \sfd(x,y)<r\}$, omitting the superscript $\X$ when there is no confusion in doing so. {
We will say that a subset $E$ of a metric space $(\X,\sfd)$ is \textit{($L$-)biLipschitz equivalent} to a subset $\rr^n$ if there exists an ($L$)-biLipschitz map $f:E\to \rr^n$. }

Given  $\alpha>0$ we say that a closed subset $E\subset \X$ of a metric measure space $\Xdm$ is \textit{locally Ahlfors $\alpha$-regular} if for every bounded set $B\subset \X$ there exists a constant $C_B\ge 1$ and a radius $R_B>0$ such that 
 \begin{equation}\label{eq:A-regular}
     C_B^{-1} r^\alpha \leq \mm(B_r(x)\cap E) \leq  C_B r^\alpha, \quad \forall\,  x \in E\cap B,\quad \forall\,  0 < r < R_B.
 \end{equation}
  If $C_B\equiv C$ can be taken independent of $B$ and $R_B=\diam(E)$ for all $B$ we say that $E$ is Ahlfors $\alpha$-regular and we call the minimal such  $C$ the Ahlfors regularity constant of $E.$ Our definition of local Ahlfors regularity, where the constant might depend on the location of the space, is motivated by the fact that any non-collapsed $\rcd(K,N)$ space is automatically locally Ahlfors $N$-regular (see \eqref{eq:non coll reg} below).
 By standard facts about differentiation of measures (see \cite[Theorem 2.4.3]{ATbook}), if  $E$ is locally Ahlfors $\alpha$-regular  then for all bounded sets $B\subset \X$
 \begin{equation}\label{eq:euiv hau}
     (c_{\alpha,B})^{-1} \hau^\alpha\restr {E\cap B} \le \mm\restr {E\cap B}\le c_{\alpha,B} \hau^\alpha\restr {E\cap B},
 \end{equation}
 where $c_{\alpha,B}\ge 1$ is a constant depending only on $C_B$ and $\alpha.$ In particular $E$ is (locally) Ahlfors regular in the space $\Xdm$ if and only if it is so in $(\X,\sfd,\hau^\alpha).$

\begin{definition}[Uniform rectifiability]\label{def:UR}
    	A closed subset $E\subset \X$ of a metric measure space $\Xdm$\footnote{Recalling \eqref{eq:euiv hau}, in Definition  \ref{def:UR}  is equivalent to consider $\hau^k$ in \eqref{eq:UR}  in place of $\mm.$ In particular our notion of UR is indeed equivalent to the one given in \cite[Def.\ 1.2.1]{BHS23}.} 
     is said to be \textit{locally uniformly $k$-rectifiable (locally UR)} if it is locally Ahlfors $k$-regular  and has \textit{locally Big Pieces of Lipschitz Images (locally BPLI) of $\R^k$} i.e.\ for every bounded set $B\subset \X$ there exist constants $\theta_B>0, L_B \geq 0$ such that for each $x \in E\cap B$ and $0 < r < R_B$ there is a set $F\subset B^{\rr^k}_r(0)$ and  an $L_B$-Lipschitz map $f \colon F \to E$ such that 
     \begin{equation}\label{eq:UR}
         \mm(B_r(x) \cap f(F)) \geq \theta_B r^k.
     \end{equation}
      Moreover if $E$ is Ahlfors $k$-regular and has Big Pieces of Lipschitz Images (BPLI) of $\rr^k$, that is to say it has locally BPLI of $\rr^k$ but we can take the constants $\theta_B, L_B$  to be independent of $B$ and $R_B=\diam(E)$ for all $B$, we say that $E$  is uniformly $k$-rectifiable (UR).
\end{definition}
Note that if $E$ is locally uniformly $k$-rectifiable, then for any $B$ we can take $R_B$ arbitrarily large, up to decreasing the constant $\theta_B$. In particular any bounded Ahlfors $k$-regular set which is locally uniformly $k$-rectifiable is in fact uniformly $k$-rectifiable.

\begin{remark}\label{rem-li-bilip}
    It is proved in \cite{Scul09} that in Definition \ref{def:UR}   it  is equivalent to require that $\Xdm$ has Big Pieces of \textit{biLipschitz} Images (BPBI) of $\R^k$, i.e.\ that the function $f$ is $L_B$-biLipschitz.
\end{remark}

We assume the reader to be familiar with the notion of \textit{Gromov-Hausdorff distance}, $\sfd_{GH},$ together with \textit{pointed measure Gromov-Hausdorff} (pmGH) \textit{convergence and distance}, $\sfd_{pmGH},$ referring to \cite{GMS15} for the relevant background.  We will call a map $g:(\X_1,\sfd_1)\to (\X_2,\sfd_2)$ between two metric spaces an \textit{$\eps$-isometry }for some $\eps>0$ if 
\[
|\sfd_2(g(x),g(y))-\sfd_1(x,y)|\le \eps, \quad \forall x,y \in \X_1.
\]
If $g$ is both an $\eps$-isometry and  $g(\X_1)$ is  $\eps$-dense in $\X_2$, we say that $g$ is a \textit{$\eps$-GH isometry}.

We say that the pointed m.m.s.\  $(Y,\sfd_Y,\mu,y)$, $y\in \Y,$ is a \textit{tangent} to $(\X,\sfd,\mm)$ at  $x\in \X$ if there exists a sequence $r_n\to 0^+$ such that $(\X,r_n^{-1}\sfd,(c_{r_n}^{ x})^{-1}\mm, x)$ pmGH-converges  to  $(Y,\sfd_Y,\mu,y)$, where 
\begin{equation}\label{eq:renorm}
    c_{r}^{ x}\coloneqq \int_{B_r(x)} 1-\frac{\sfd(\cdot ,  x)}{r} \d \mm.
\end{equation}
Tangents are not necessarily unique, and we denote by $\tan(\X,\sfd,\mm,x)$ the class of all tangents to $\Xdm$ at $x\in \X$. The \textit{$k$-dimensional regular set} $\mathcal R_k$ is given by 
\[
\mathcal R_k\coloneqq \left  \{x \in \X \ : \ \tan(\X,\sfd,\mea,x)=\{(\rr^k,\sfd_{Eucl},c_k\mathcal H^k,0^k)\}  \right\},
\]
where $c_k\coloneqq \int_{B_1(0^k)} (1-|y|)\d \mathcal H^k(y).$ A pointed metric measure space $(\X,\sfd,\mea,x)$ satisfying  $c_1^x=1$ is called \textit{normalised}.

For an introduction to the theory of $\rcd(K,N)$ spaces we refer to the surveys \cite{Aicm,G23} and references therein, limiting ourselves here to recall the main results that we need. 
\begin{remark}\label{rmk:compactness}
    A key property of $\rcd$ spaces is compactness  in the pmGH-topology, i.e.\ a sequence $(\X_n,\sfd_n,\mm_n,x_n)$ of pointed $\rcd(K,N)$ spaces satisfying $\mm_n(B_1(x_n))\in [v^{-1},v]$ for some $v\ge1$, admits a subsequence converging in the pmGH-sense to a limit  $\rcd(K,N)$ space
    $(X_\infty,\sfd_\infty,\mea_\infty,x_\infty)$.
\end{remark}
A basic scaling property is:
\begin{equation}\label{eq:rcd scaling}
    \Xdm \text{ is an $\rcd(K,N)$ space} \implies (\X,r^{-1}\sfd,\mea) \text{ is an $\rcd(r^2K,N)$ space}.
\end{equation}
The following is part of the now well established  structural and rectifiability properties of $\rcd$ spaces.
\begin{theorem}[\hspace{1sp}\cite{MN19,AHT18}]\label{thm:structural}
      Let $\Xdm$ be an  $\rcd(K,N)$ space with $N<\infty$. Then 
    \[
    \mm\left( \X\setminus \bigcup_{k=1}^{\floor N} \mathcal R_k\right)=0
    \]
    and $\lim_{r\to 0^+} \frac{\mea(B_r(x))}{r^k}\in (0,\infty)$  for $\mea$-a.e.\ $x \in \mathcal R_k$, for all $k=1,\dots,\floor N.$
\end{theorem}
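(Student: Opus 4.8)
The statement packages together two results: the $\mea$-a.e.\ decomposition $\X=\bigcup_{k=1}^{\floor N}\mathcal R_k$ up to a null set, which is the Mondino--Naber structure theorem, and the a.e.\ existence, positivity and finiteness of the $k$-density on $\mathcal R_k$, which comes from the later refinements (the heat kernel asymptotics of Ambrosio--Honda--Tewodrose, together with the absolute continuity results of De Philippis--Marchese--Rindler, Gigli--Pasqualetto and Kell--Mondino). I would organize the argument around two goals: (i) at $\mea$-a.e.\ point there is a \emph{unique} tangent, and it is a Euclidean space $\R^k$ with $k\le\floor N$; (ii) at $\mea$-a.e.\ point of $\mathcal R_k$ the limit $\lim_{r\to0^+}\mea(B_r(x))/r^k$ exists in $(0,\infty)$.

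For (i), the engine is the theory of $\delta$-splitting maps. Using the existence of good cut-off functions and the Bochner--Bakry--\'Emery inequality valid on $\rcd(K,N)$ spaces, one produces at $\mea$-a.e.\ point and at a dense set of small scales componentwise-harmonic maps $u\colon B_r(x)\to\R^\ell$ which are $\delta$-splitting for every $\delta>0$ as long as $\ell$ stays below a critical integer $k(x)$; Bishop--Gromov and the generalized Bochner inequality force $k(x)\le N$, hence $k(x)\le\floor N$. The splitting theorem in the $\rcd$ setting (Gigli's splitting theorem together with its quantitative ``almost'' version) then shows that any tangent at such a point splits off an $\R^{k(x)}$ factor. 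The delicate part is to upgrade this to: the tangent equals \emph{exactly} $\R^{k(x)}$ and is \emph{unique}. Here I would follow Mondino--Naber: combine the fact that tangents of tangents are tangents with a Vitali/covering argument showing that the set where a Euclidean tangent appears at one scale but not at all smaller scales is $\mea$-negligible, and invoke that a $(k(x),\delta)$-splitting map is, off a subset of small measure, biLipschitz onto its image (the lower distance bound uses that a degeneration of the map would contradict the non-collapsing of the $\R^{k(x)}$-factor via Bishop--Gromov). This biLipschitz property also yields the rectifiability of each $\mathcal R_k$ by countably many charts, which is used implicitly below.

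For (ii), fix $x\in\mathcal R_k$ generic. Uniqueness of the (Euclidean) tangent gives at once that $\mea(B_r(x))/r^k$ is bounded above and below for all small $r$. To obtain the actual limit one uses that on $\mathcal R_k$ the reference measure is mutually absolutely continuous with $\hau^k$ (the content of the works of De Philippis--Marchese--Rindler, Gigli--Pasqualetto and Kell--Mondino), so $\mea\restr{\mathcal R_k}=\theta\,\hau^k\restr{\mathcal R_k}$ for a density $\theta$ with $0<\theta<\infty$ a.e.; combining this with the rectifiability of $\mathcal R_k$ and a Federer-type blow-up/differentiation argument gives $\lim_{r\to0^+}\mea(B_r(x))/r^k=\omega_k\,\theta(x)\in(0,\infty)$ for $\mea$-a.e.\ $x$. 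Alternatively, and more directly, the short-time heat kernel asymptotics of \cite{AHT18} at a.e.\ point of $\mathcal R_k$ yield the same conclusion without passing through rectifiability.

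The main obstacle is step (i), and within it the a.e.\ \emph{uniqueness} of tangents: the construction of splitting maps and the qualitative splitting are by now fairly routine once the $\rcd$ calculus (Bochner inequality, cut-offs, harmonic replacement, Bishop--Gromov) is available, but the passage from ``some tangent at some scale is Euclidean'' to ``all tangents at all small scales are the same $\R^k$'' requires the quantitative splitting theorem together with a careful covering argument and the biLipschitz estimate for splitting maps, and this is where the bulk of the work in \cite{MN19} lies. Once that is in hand, the dimension bound $k\le\floor N$, the biLipschitz charts, and the density computation all follow from standard measure-differentiation and rigidity arguments.
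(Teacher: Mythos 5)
You should first note that the paper does not prove this statement at all: it is imported as a black box, with the decomposition into the regular sets $\mathcal R_k$ taken from \cite{MN19} and the existence, positivity and finiteness of the density on $\mathcal R_k$ taken from \cite{AHT18} (which in turn rests on the absolute continuity results of \cite{DPMR17,KM18,GP21}). So there is no in-paper argument to compare against; what you wrote is an outline of the cited proofs, and as such it is essentially sound: (i) a.e.\ existence and uniqueness of a Euclidean tangent $\R^k$, $k\le \floor N$, via almost-splitting maps, their propagation to smaller scales and biLipschitz behaviour off a small set, a covering argument, and the splitting theorem for the rigidity step; (ii) the density statement via $\mm\restr{\mathcal R_k}\ll\hau^k$, rectifiability of $\mathcal R_k$, and standard differentiation of measures, which is exactly how \cite{AHT18} proceeds. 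Two caveats. First, the maps actually used in \cite{MN19} are not harmonic $\delta$-splitting maps but maps built from distance functions controlled through excess estimates and an almost-splitting theorem; the harmonic-splitting-map route you describe is the later argument of \cite{BPS21,BPS23bis} (and is the technology the present paper uses in Theorem \ref{thm:delta split and eps isom}), so your sketch is closer in spirit to that proof than to \cite{MN19} itself -- either route establishes the quoted statement. Second, your proposed ``alternative'' for (ii) via the short-time heat-kernel asymptotics of \cite{AHT18} has the logic reversed: in that paper the a.e.\ existence of the density on $\mathcal R_k$ is proved first (using precisely the rectifiability and absolute continuity you wanted to bypass), and the heat-kernel asymptotics are then deduced from it, so it cannot serve as an independent shortcut. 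Neither point is a gap in the main line of your argument.
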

Actually, as shown in \cite{BS20}, $\mm(\mathcal R_k)=0$ for all except one $k$, however Theorem \ref{thm:structural} will suffice for our purposes. 

 According to the notation introduced in \cite{DPG18}, an $\rcd(K,N)$ space endowed with the reference measure  $\mm=\hau^N$ is said to be \textit{non-collapsed}. It is also shown in \cite{DPG18} that $N$ must be an integer. 

\begin{remark}
    It is worth to mention that if $(\X,\sfd,\hau^n)$ is an $\rcd(K,N)$ space which is also $n$-locally Ahlfors regular (as usually assumed here) then it is in fact  an $\rcd(K,n)$ space and in particular it is non-collapsed in the terminology above (see \cite{H20,BGHZ23}). It is conjectured  in \cite{H20bis} that this actually holds without assuming local Ahlfors regularity.
\end{remark}
 
 For a non-collapsed $\RCD(K,N)$ space $(\X,\sfd,\hau^N)$ we also recall the notion of \textit{$k$-singular set}, for all $0\le k\le N-1$,
 \[
 \mathcal S^k\coloneqq \{x \in\ X \ : \ \text{no element of $\tan(\X,\sfd,\mea,x)$ splits off $\rr^{k+1}$ isometrically}\}.
 \]
The $k$-singular sets are nested and induce the following stratification
\[
\mathcal S^0\subset \mathcal S^1\subset \dots \subset \mathcal S^{N-1}=\X\setminus \mathcal R_N.
\]
 It was proved in \cite{DPG18} that 
\begin{equation}\label{eq:sk dim bound}
    \dim_H(\mathcal S^k)\le k, \quad \text{ for all } 0\le k\le N-1.
\end{equation}
The \textit{boundary} of a non-collapsed $\RCD(K,N)$ space $\Xdm$ was introduced in \cite{DPG18} as
\[
\partial \X \coloneqq \overline{\mathcal S^{N-1}\setminus \mathcal S^{N-2}}.
\]
It easily follows from the definition that $\partial \X \setminus ({\mathcal S^{N-1}\setminus \mathcal S^{N-2}})\subset \mathcal S^{N-2}$ (see e.g.\ \cite[(1.10)]{BNS22}), hence it also follows that
\begin{equation}\label{eq:N-2}
    \dim_H\big(\partial \X \setminus ({\mathcal S^{N-1}\setminus \mathcal S^{N-2}})\big)\le N-2.
\end{equation}
As observed in \cite[Lemma 4.6]{MK21} it  holds
\begin{equation}\label{eq:half space tangent}
(\rr^N_+,\sfd_{Eucl}, c_N/2\mathcal H^N,0^N)\in \tan(\X,\sfd,\mea,x), \quad \text{ for all $x \in \mathcal S^{N-1}\setminus \mathcal S^{N-2}$}.
\end{equation}
In particular by the volume convergence theorem \cite[Theorem 1.3]{DPG18} we have both 
\begin{equation}\label{eq:theta}
     \begin{split}
     &\theta_N(x)=1, \quad \forall x \in \mathcal R_N,\\
     &\theta_N(x)=\frac12, \quad \forall x \in  \mathcal S^{N-1}\setminus \mathcal S^{N-2}.
 \end{split}
\end{equation}
where $\theta_N(x)\coloneqq \lim_{r\to 0^+} \frac{\hau^N(B_r(x))}{\omega_N r^N}$, which exists by the Bishop-Gromov inequality. By standard measure theory (see \cite[Theorem 2.4.3]{ATbook}) it holds that $\theta_N(x)\le 1$ for all $x\in \X$. This combined with the Bishop-Gromov inequality \cite{Sturm06-2} shows that 
\begin{equation}\label{eq:non coll reg}
    \parbox{13cm}{a non-collapsed $\rcd(K,N)$ space $(\X,\sfd,\hau^N)$ is locally Ahlfors $N$-regular.}
\end{equation}

The key tool that we will use in the sequel is the one of harmonic $\delta$-splitting maps, which play the role of coordinate-functions. These type  of maps were introduced in \cite{CC96} in the context of Ricci limit spaces (see also \cite{CC1,CC2,CC3}) and have been extended to the $\RCD$ setting in \cite{BPS23bis} (see also \cite{BPS21,BNS22} for further developments).
\begin{definition}
	Let \((X,\sfd,\mm)\) be an \(\RCD(-(N-1),N)\) space. Let \(x\in X\) and \(\delta>0\)
	be given. Then a map \(u=(u_1,\ldots,u_k)\colon B_r(x)\to\R^k\) is said to be
	a \emph{\(\delta\)-splitting map} provided:
	\begin{itemize}
		\item[\(\rm i)\)] \(u_a\colon B_r(x)\to\R\) is harmonic and \(C_N\)-Lipschitz
		for every \(a=1,\ldots,k\),
		\item[\(\rm ii)\)] \(r^2\fint_{B_r(x)}\big|{\rm Hess}(u_a)\big|^2\,\d\mm\leq\delta\)
		for every \(a=1,\ldots,k\),
		\item[\(\rm iii)\)] \(\fint_{B_r(x)}|\nabla_\mm u_a\cdot\nabla_\mm u_b
		-\delta_{ab}|\,\d\mm\leq\delta\) for every \(a,b=1,\ldots,k\).
	\end{itemize}
\end{definition}
We will never explicitly use  the above definition of splitting maps, but we will instead exploit some of their properties listed in the result below. For this reason we will avoid introducing the notion of gradient and Hessian in the metric setting and refer to \cite{GP20} for details.
\begin{theorem}[Properties of $\delta$-splitting maps]\label{thm:delta split and eps isom}
    For every $N\in [1,\infty),$ $C\ge 1$ and $\eps \in(0,1/2)$ there exists $\delta=\delta(N,\eps,C)\in(0,1)$ such that the following hold. Let $(\X,\sfd,\mm)$ be an   $\rcd(-\delta,N)$ space. Then for all  $x \in \X $ and $r\in(0,1]$ it holds:
    \begin{enumerate}[label=\roman*)]
    \item\label{it:propagation}   if \(u\colon B_{2r}(x)\to\R^k\) is an
	\(\eta\)-splitting map, with $\eta\in(0,1)$, then there exists a Borel set \(G\subseteq B_r(x)\) such that
	\(\mm\big(B_r(x)\setminus G\big)\leq C_N\sqrt\eta\,\mm\big(B_r(x)\big)\) and
	$
	u\colon B_s(y)\to\R^k$ is a $\sqrt{\eta}s$-splitting map for every $y\in G$ and $s\in(0,r),$
        \item\label{it:eps to delta} if 
        \begin{equation}
            \sfd_{pmGH}((\X,r^{-1}\sfd,\mea,x)),(\rr^k,|\cdot|,c\mathcal H^k,0^k))\le \delta,
        \end{equation}
         for some constant $c>0$, then there exists an $ \eps r$-splitting map $u:B_{r}(x)\to \rr^k$,
         \item\label{it:delta to eps} assuming furthermore that
         \begin{equation}\label{eq:AD-regular in delta split prop}
        C^{-1}s^k\le \mm(B_s(y))\le Cs^k, \quad \text{for all $y\in B_r(x)$ and $s\in (0,r]$.}
    \end{equation}
    and that $u:B_{6r}(x)\to \rr^k$  is $\delta r$-splitting map,
         then
         \begin{equation}
             \sfd_{GH}(B_\frac{r}{k}(x),B^{\rr^k}_\frac{r}{k}(0))\le \eps r
         \end{equation}
         and $u: B_\frac{r}{k}(x) \to \rr^k$ is an $\eps r$-isometry.
    \end{enumerate}
\end{theorem}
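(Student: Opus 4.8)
\textbf{Proof plan for Theorem \ref{thm:delta split and eps isom}.}

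The plan is to prove the three items essentially independently, treating \(\delta\) as a parameter to be chosen small enough at the end to make all three work simultaneously with the given \(\eps, C, N\).

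For item \ref{it:propagation}, the key input is the standard self-improvement / propagation property of splitting maps under a lower Ricci bound, which is by now classical in the \(\RCD\) setting (it is the content of the cited works \cite{BPS23bis,BPS21,BNS22}). The strategy is: starting from the global Hessian bound \(r^2\fint_{B_{2r}(x)}|{\rm Hess}(u_a)|^2\le\eta\), one uses a maximal-function argument. Define, for \(y\in B_r(x)\), the maximal function \(M(y)\coloneqq \sup_{0<s<r}s^2\fint_{B_s(y)}\sum_a|{\rm Hess}(u_a)|^2\,\d\mm\); by the weak \((1,1)\) bound for the (uncentered) maximal operator on doubling spaces together with Bishop–Gromov volume doubling, \(\mm(\{M>\sqrt\eta\})\le C_N\eta^{1/2}\mm(B_r(x))\cdot\eta^{-1/2}\cdot\eta = C_N\sqrt\eta\,\mm(B_r(x))\); more precisely one balances so that the exceptional set where the rescaled Hessian integral exceeds \(\sqrt\eta s^2\) at some scale has measure \(\le C_N\sqrt\eta\,\mm(B_r(x))\). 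On the complement \(G\), property ii) of the splitting map holds at every ball \(B_s(y)\) with parameter \(\sqrt\eta s\). Property i) (harmonicity, Lipschitz bound) is inherited verbatim on subballs. For property iii), one propagates the \(L^1\) estimate on \(\nabla u_a\cdot\nabla u_b-\delta_{ab}\) from scale \(r\) down to scale \(s\) using that the Hessian controls the oscillation of \(\nabla u_a\cdot\nabla u_b\): writing \(\nabla(\nabla u_a\cdot\nabla u_b)\) in terms of the Hessians and using Bochner/integration against good cutoffs, one gets \(\fint_{B_s(y)}|\nabla u_a\cdot\nabla u_b-\delta_{ab}|\le C_N(\fint_{B_r(x)}|\nabla u_a\cdot\nabla u_b-\delta_{ab}| + (\text{telescoping sum of Hessian terms}))\), which on \(G\) is \(\le\sqrt\eta s\) after absorbing constants into the choice of \(\delta\). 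The \(\RCD(-\delta,N)\) hypothesis (as opposed to \(\RCD(-(N-1),N)\)) enters only to keep the curvature-error terms in Bochner proportional to \(\delta\) and hence negligible; this is why \(\delta\) must be small.

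For item \ref{it:eps to delta}, this is the construction direction: given pmGH-closeness of the rescaled ball to a Euclidean ball \((\rr^k,|\cdot|,c\hau^k,0^k)\), produce an \(\eps r\)-splitting map. Here I would argue by contradiction and compactness, which is the cleanest route and the reason the theorem is phrased with an unspecified \(\delta=\delta(N,\eps,C)\). Suppose no such \(\delta\) works: then there are \(\RCD(-1/j,N)\) spaces \((\X_j,\sfd_j,\mm_j,x_j)\) with \(r_j\in(0,1]\) such that the rescalings are \(1/j\)-pmGH-close to Euclidean \(k\)-space but no \(\eps r_j\)-splitting map exists on \(B_{r_j}(x_j)\). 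After rescaling so \(r_j=1\), pass to a pmGH-limit; by stability of \(\RCD\) (Remark \ref{rmk:compactness}) and the hypothesis, the limit is exactly \((\rr^k,|\cdot|,c\hau^k,0^k)\). On \(\rr^k\) the coordinate functions are harmonic, \(1\)-Lipschitz, have vanishing Hessian and \(\nabla x_a\cdot\nabla x_b=\delta_{ab}\); using the now-standard convergence results for harmonic functions and for the \(W^{1,2}\)/\(H^{1,2}_C\) objects under pmGH-convergence (the solvability of Dirichlet problems with boundary data the ambient coordinates, and \(L^2\)-strong convergence of gradients plus lower semicontinuity / convergence of Hessian energies — all from \cite{BPS23bis,AHT18}), one produces on \(B_1(x_j)\), for \(j\) large, harmonic functions \(u_j^a\) (solving the Dirichlet problem with suitable boundary data pulled back from the limit coordinates, then truncating/rescaling the Lipschitz constant to \(C_N\)) satisfying ii) and iii) with parameter \(\to 0\), i.e.\ eventually \(\le\eps\). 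This contradicts the choice of the sequence. Scaling back gives the \(\eps r\)-splitting map on \(B_r(x)\).

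For item \ref{it:delta to eps}, this is the converse: a genuine \(\delta r\)-splitting map, together with uniform Ahlfors \(k\)-regularity \eqref{eq:AD-regular in delta split prop} on \(B_r(x)\), forces GH-closeness of \(B_{r/k}(x)\) to a Euclidean ball and forces \(u\) to be an \(\eps r\)-isometry there. Again I would use contradiction/compactness: if not, take \(\RCD(-1/j,N)\) spaces with \(1/j\)-splitting maps \(u_j\) on \(B_{6}(x_j)\) (after scaling \(r=1\)) and the uniform regularity bound with the fixed constant \(C\), but with \(\sfd_{GH}(B_{1/k}(x_j),B^{\rr^k}_{1/k}(0))>\eps\) or \(u_j|_{B_{1/k}(x_j)}\) not an \(\eps\)-isometry. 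The regularity bound gives uniform volume upper and lower bounds, so by Remark \ref{rmk:compactness} we may extract a pmGH-limit \(\RCD(0,N)\) space \((\Y,\sfd_\Y,\mu,y)\), and the maps \(u_j\) converge (in the appropriate \(W^{1,2}\)-sense, using again \cite{BPS23bis,AHT18}) to a limit map \(u_\infty\colon B_6(y)\to\rr^k\) which is harmonic, \(C_N\)-Lipschitz, has \({\rm Hess}(u_\infty^a)=0\) and \(\nabla u_\infty^a\cdot\nabla u_\infty^b=\delta_{ab}\) on \(B_6(y)\). By the splitting theorem in the \(\RCD\) setting (a function with vanishing Hessian and unit gradient induces an isometric splitting — here \(k\) such functions give \(\Y\cong\rr^k\times\Y'\) on the relevant ball), combined with the uniform Ahlfors \(k\)-regularity passing to the limit \(\mu(B_s(z))\in[C^{-1}s^k,Cs^k]\), the factor \(\Y'\) must be a point, so \(B_6(y)\) is isometric to a Euclidean ball and \(u_\infty\) is (a translate of) the identity, hence an isometry on \(B_{1/k}(y)\). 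Then the pmGH-convergence upgrades this to \(\sfd_{GH}(B_{1/k}(x_j),B^{\rr^k}_{1/k}(0))\to 0\) and to \(u_j|_{B_{1/k}(x_j)}\) being an \(o(1)\)-isometry, contradicting the choice of the sequence. Scaling restores the factor \(r\).

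The main obstacle is item \ref{it:delta to eps}: one needs the uniform lower Ahlfors bound to be used \emph{quantitatively} — the splitting theorem alone gives \(\Y\cong\rr^k\times\Y'\), and only the lower volume density bound forces \(\Y'\) to be trivial and, crucially, forces the convergence \emph{with control} so that the GH-distance between the actual (non-limit) ball and the Euclidean ball is genuinely \(\le\eps r\) rather than merely small in a limit. The role of the two different radii (hypothesis on \(B_{6r}\), conclusion on \(B_{r/k}\)) is exactly to have enough room for the propagation of item \ref{it:propagation} and for interior estimates on the splitting map to survive passage to the limit; keeping track that the same \(\delta\) works for all three items simultaneously is a bookkeeping point but not a real difficulty, since each \(\delta\) is chosen last.
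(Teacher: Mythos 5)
Your proposal is correct and follows essentially the same route as the paper: items \ref{it:propagation} and \ref{it:eps to delta} are exactly the cited results of Bru\`e--Pasqualetto--Semola (the paper simply quotes \cite{BPS21} and \cite[Proposition 3.9]{BPS23bis} where you sketch their proofs), and for item \ref{it:delta to eps} your compactness argument --- pass to an $\RCD(0,N)$ limit, split off $\R^k$ via the limit splitting map, and use the Ahlfors $k$-regularity (hence $\dim_H\le k$) to force the complementary factor to be a point --- is precisely the paper's adaptation of \cite[Theorem 3.8]{BNS22}.
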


\begin{proof}
Item $\ref{it:propagation}$ is just Proposition 1.6 in \cite{BPS21}.
    For items $\ref{it:eps to delta}$ and $\ref{it:delta to eps}$ by the scaling property of  $\delta$-splitting maps  we can assume that $r=1$. Item $\ref{it:eps to delta}$ is then precisely \cite[Proposition 3.9]{BPS23bis}.   Item $\ref{it:delta to eps}$ follows instead from $ii)$ in \cite[Theorem 3.8]{BNS22}, the only difference is that therein   the space is assumed to be normalised and the conclusion is that  $(u,f):B_{1/k}(x)\to  \rr^k\times Z$ is an $\eps $-isometry for some $(Z,\sfd_Z)$ and some $f:B_{1/k}(x)\to Z$. We explain now how the argument in \cite[Theorem 3.8]{BNS22} gives also the version stated here (cf.\ also with \cite[Remark 3.10]{BNS22}). As in \cite{BNS22} by contradiction we assume the existence of  a sequence $(\X_n,\sfd_n,\mea_n,x_n)$ of $\rcd(-1/n,N)$ spaces such that \eqref{eq:AD-regular in delta split prop} holds (with $x=x_n$, with $k=k_n\in \nn$ and  the same constant $C>0$)    and also of $1/n$-splitting maps $u_n: B_{6}(x_n)\to \rr^{k_n}$ such that  either $u_n:B_{1/k}(x_n) \to \rr^k$ is not an $\eps $-isometry or $\sfd_{GH}(B_{1/k}(x),B^{\rr^k}_{1/k}(0))> \eps $. Since $k_n\le N$ (e.g.\ by  Theorem \ref{thm:structural}), up to a subsequence we can assume that $k_n\equiv k\in \nn$.  Since $\mm_n(B_1(x_n)) \in [C^{-1},C]$ (which replaces the normalised assumption), up to a further subsequence, $(\X_n\,\sfd_n,\mea_n,x_n)$ pmGH-converge to some $\rcd(0,N)$ space $(X_\infty,\sfd_\infty,\mea_\infty,x_\infty)$ (recall Remark \ref{rmk:compactness}). In particular $\X_\infty$ still satisfies \eqref{eq:AD-regular in delta split prop} with $x=x_\infty$ and $r=1$, which  implies $\dim_H(B_{1}(x_\infty))\le k$  (see \cite[Theorem 2.4.3]{ATbook}). Proceeding now verbatim as in \cite{BNS22} we can find a map $( u, f):B_{1/k}(x_\infty)\to \rr^k\times Z$ which is an isometry with its image (for some $(Z,\sfd_Z)$ and some $f$) and deduce for $n$ big enough  that $\sfd_{GH}(B_{1/k}(x),B^{\rr^k\times Z}_{1/k}(0))\le  \eps $ and  $(u_n,f_n): B_{1/k}(x_n)\to \rr^k\times Z$ is an $\eps$-isometry for some $f_n$  and some $z\in \Z$ independent of $n$ (see respectively eq.\ $(3.34)$ and $(3.35)$ in \cite{BNS22}).  It is therefore enough to show that $ f(B_{1/k}(x_\infty))= \{ f(x_\infty)\},$ indeed we could then  replace $Z$ with $\{ f(x_\infty)\}$ and get the desired contradiction. To show this we note that  $( u, f)$ is obtained in \cite{BNS22} applying Theorem 3.4 therein and, inspecting its proof, we see that $( u, f)\coloneqq \Phi^{-1}\restr{B_{1/k}(x_\infty) }$ where $\Phi: (-1/k,1/k)^k\times B_1(q)\to \X$ (for some $q\in \Z$) is an isometry  with its image, which contains $B_{1/k}(x_\infty)$. Therefore $( u, f)(B_{1/k}(x_\infty))$ is open in $\rr^k\times Z$. If by contradiction $ f(x_\infty)\neq  f(x)$ for some $x \in B_{1/k}(x_\infty)$ the $Z$-component of  $( u, f)(\gamma)$, where $\gamma$ is a geodesic from $x_\infty$ to $x$ (recall that $\X_\infty$ is geodesic), is itself a geodesic from $ f(x_\infty)$ to $ f(x)$ (see e.g.\ Lemma 3.6.4 in \cite{BBI01}). Hence  $ \hau^1(B_r^Z(f(x_\infty)))>0$ for all $r>0$. Thus, since $ ( u, f)(B_{1/k}(x_\infty))$ is  open, it contains $B^{\rr^k}_s(u(x_\infty))\times B^Z_s(f(x_\infty))$ for some $s>0$ and thus it has positive $\mathcal H^{k+1}$-measure (see \cite[Theorem 2.10.45]{Federer}). However as observed above $\dim_H(B_{1/k}(x_\infty))\le k$ which contradicts the fact that  $( u, f)$ is an isometry.
\end{proof}

\section{Proof of the results}
\subsection{Uniform rectifiability of Ahlfors regular RCD spaces}
The proof is a  combination of two results. The first (Proposition \ref{prop:jonas lemma}) says that in an Ahlfors $k$-regular $\rcd$ space every ball contains another ball of comparable size that is \textit{almost-flat}, i.e.\ Gromov-Hausdorff close to an Euclidean ball in $\rr^k$ of the same radius. The second (Proposition \ref{prop:flat implies bilip})  shows that a big portion of an almost-flat ball is covered by a biLipschitz image of $\rr^k.$

The  result below is inspired by \cite[Lemma 2.4]{A21}, where a similar statement is shown for Ahlfors regular sets in the Euclidean space supporting a Poincaré inequality. 
\begin{prop}[Existence of many large almost flat balls]\label{prop:jonas lemma}
    For every $\eps>0$, $C\ge 1$ and $N\in[1,\infty)$  there exists $\eta=\eta(\eps,C,N)>0$ such that the following holds. Let $\Xdm$ be an $\rcd(-(N-1),N)$ space and $x\in \X$ be such that for some $k\in\nn$ it holds
     \begin{equation}\label{eq:AD-regular in jonas lemma}
        C^{-1}s^k\le \mm(B_s(y))\le Cs^k, \quad \text{for all $y\in B_1(x)$ and $s\in (0,1]$.}
    \end{equation}
    Then there exists $x'\in B_{\frac12}(x)$, $r_0\in(\eta,1/2)  $ and $c>0$  such that
    \begin{equation}\label{eq:large almost flat ball}
        \sfd_{pmGH}((\X,{r_0}^{-1}\sfd,\mea,x'),(\rr^k,|\cdot|,c\mathcal H^k,0^k))\le \eps .
    \end{equation}
\end{prop}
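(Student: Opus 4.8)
The plan is to argue by contradiction, using the pmGH-compactness of $\rcd(-(N-1),N)$ spaces (Remark \ref{rmk:compactness}) to deduce the quantitative statement from the qualitative fact (Theorem \ref{thm:structural}) that $\mm$-a.e.\ point admits a Euclidean tangent; the whole difficulty is that at a fixed regular point a good scale exists by the very definition of tangent, but \emph{a priori} only at unboundedly small scales, and compactness is what forces the lower bound $\eta$ to depend only on $\eps,C,N$. As a preliminary I would record that under \eqref{eq:AD-regular in jonas lemma} one has $k\le\floor N$ and $\mm$-a.e.\ $y\in B_1(x)$ lies in $\mathcal R_k$: indeed by Theorem \ref{thm:structural}, for $\mm$-a.e.\ such $y$ there is $j=j(y)\in\{1,\dots,\floor N\}$ with $y\in\mathcal R_j$ and $\lim_{s\to0^+}\mm(B_s(y))/s^j\in(0,\infty)$, while \eqref{eq:AD-regular in jonas lemma} gives $\mm(B_s(y))/s^j\in[C^{-1}s^{k-j},Cs^{k-j}]$ for $s\in(0,1]$, so letting $s\to0^+$ forces $j=k$. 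Consequently, for such $y$ and any $\eps>0$ there is $\rho\in(0,1)$ with $\sfd_{pmGH}\big((\X,\rho^{-1}\sfd,(c_\rho^{y})^{-1}\mm,y),(\rr^k,|\cdot|,c_k\mathcal H^k,0^k)\big)<\eps$, but with no control on $\rho$.

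Suppose then the proposition fails for some $\eps>0$, $C\ge1$, $N$. Then there are $\eta_n\searrow0$, $\rcd(-(N-1),N)$ spaces $(\X_n,\sfd_n,\mm_n)$, points $x_n$ and integers $k_n$ satisfying \eqref{eq:AD-regular in jonas lemma} (with the same $C$), such that for every $x'\in B_{1/2}(x_n)$, $r_0\in(\eta_n,1/2)$ and $c>0$ one has $\sfd_{pmGH}\big((\X_n,r_0^{-1}\sfd_n,\mm_n,x'),(\rr^{k_n},|\cdot|,c\mathcal H^{k_n},0^{k_n})\big)>\eps$. By the preliminary observation $k_n\le\floor N$, so after a subsequence $k_n\equiv k$; and since \eqref{eq:AD-regular in jonas lemma} gives $\mm_n(B_1(x_n))\in[C^{-1},C]$, Remark \ref{rmk:compactness} yields, after a further subsequence, pmGH-convergence of $(\X_n,\sfd_n,\mm_n,x_n)$ to an $\rcd(-(N-1),N)$ space $(\X_\infty,\sfd_\infty,\mm_\infty,x_\infty)$. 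Passing \eqref{eq:AD-regular in jonas lemma} to the limit (standard, using that along pmGH-convergence the measure of a ball of a.e.\ radius, centred at a converging sequence of points, passes to the limit), one gets $C^{-1}s^k\le\mm_\infty(B_s(y))\le Cs^k$ for all $y\in B_{1/2}(x_\infty)$ and $s\in(0,1/2]$.

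Now I would apply the preliminary observation inside $\X_\infty$ (note $\mm_\infty(B_{1/4}(x_\infty))>0$ since $\supp\mm_\infty=\X_\infty$) to obtain a point $z\in\mathcal R_k\cap B_{1/4}(x_\infty)$, hence a scale $\rho\in(0,1/4)$ with
\[
\sfd_{pmGH}\big((\X_\infty,\rho^{-1}\sfd_\infty,(c_\rho^{z})^{-1}\mm_\infty,z),(\rr^k,|\cdot|,c_k\mathcal H^k,0^k)\big)<\eps/2 .
\]
Picking $z_n\to z$ along the pmGH-convergence, one has $z_n\in B_{1/2}(x_n)$ for $n$ large, $c_\rho^{z_n}\to c_\rho^{z}>0$ (the integrand in \eqref{eq:renorm} is continuous and vanishes on $\partial B_\rho(z)$), and $(\X_n,\rho^{-1}\sfd_n,(c_\rho^{z_n})^{-1}\mm_n,z_n)$ pmGH-converges to $(\X_\infty,\rho^{-1}\sfd_\infty,(c_\rho^{z})^{-1}\mm_\infty,z)$; hence for $n$ large this space is $\eps$-close to $(\rr^k,|\cdot|,c_k\mathcal H^k,0^k)$. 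Absorbing $(c_\rho^{z_n})^{-1}$ together with the factor relating $\mathcal H^k$ for $\sfd_n$ and for $\rho^{-1}\sfd_n$ into the free constant $c>0$ allowed in \eqref{eq:large almost flat ball}, and using that $\rho\in(\eta_n,1/2)$ for $n$ large, this contradicts the choice of $(\X_n,x_n)$ (with $x'=z_n$, $r_0=\rho$). The only delicate points are bookkeeping ones --- passing the Ahlfors bounds to the pmGH-limit, the continuity $z\mapsto c_\rho^{z}$, and matching the normalisation of the reference measure, the last being exactly what the freedom to choose $c>0$ in \eqref{eq:large almost flat ball} takes care of. Conceptually there is no deep obstacle: the content is that the $\mm$-a.e.\ existence of Euclidean tangents, together with compactness, self-improves to a scale lower bound $\eta$ depending only on $\eps,C,N$.
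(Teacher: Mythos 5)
Your proposal is correct and follows essentially the same route as the paper: a compactness--contradiction argument, passing the Ahlfors bounds to the pmGH-limit, invoking Theorem \ref{thm:structural} (with the density argument pinning the regular-set dimension to $k$) to find an almost-Euclidean scale in the limit space, and transferring it back to the sequence with the normalisation absorbed into the free constant $c$. The extra bookkeeping you spell out (limit Ahlfors regularity, continuity of $c_\rho^{z}$) is exactly what the paper leaves implicit.
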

\begin{proof}
    Suppose by contradiction that there exist $\eps>0$, $C>0$, $N\in [1,\infty)$, a sequence of  pointed $\rcd(-(N-1),N)$ spaces $(\X_n,\sfd_n,\mea_n,x_n)$ and integers $k_n\in \nn$ such that \eqref{eq:AD-regular in jonas lemma} holds with $x=x_n$, $k=k_n$ and such that for all $x'\in B_{1/2}(x_n)$, all $r\in( 1/n,1)$ and all $c>0$ it holds 
 \begin{equation}\label{eq:contradiction ass}
      \sfd_{pmGH}((\X_n,{r}^{-1}\sfd_n,\mea_n,x_n'),(\rr^k,|\cdot|,c\mathcal H^k,0^k))> \eps .
 \end{equation}
As $k_n\le N$ (recall Theorem \ref{thm:structural}) up to a subsequence we can assume that $k_n\equiv k\in \nn.$
Since by assumption  it holds $\mm_n(B_{1}(x_n))\in[C^{-1},C]$, up to a subsequence, we have that $(\X_n,\sfd_n,\mea_n,x_n)$ converge in the pmGH-sense to an $\RCD(-(N-1),N)$ space $(\X_\infty,\sfd_\infty,\mea_\infty,x_\infty)$ (recall Remark \ref{rmk:compactness}).  In particular $\X_\infty$ still satisfies \eqref{eq:AD-regular in jonas lemma} with $x=x_\infty$. Therefore by Theorem  \ref{thm:structural}  we deduce that for $\mea_\infty$-a.e.\   $x\in B_1(x_\infty)$ it holds $\tan(\X,\sfd,\mea,x)=\{(\rr^k,|\cdot|,c_k\mathcal H^k,0^k)\}$. Hence we can find $x\in B_{1/4}(x_\infty)$ and  $s\in(0,1)$ such that 
\begin{equation}\label{eq:contr trigger}
    \sfd_{pmGH}((\X_\infty,s^{-1}\sfd_\infty,\mea_\infty,x),(\rr^k,|\cdot|,c_k\cdot c_s^{y}\mathcal H^k,0^k))\le \frac \eps2,
\end{equation}
where $c_s^{y}>0$ is as in \eqref{eq:renorm}.
On the other hand by the pmGH-convergence we can find points $x_n'\in \X_n$ such that $(\X_n,\sfd_n,\mea_n,x_n')$ pmGH converge to $(\X_\infty,\sfd_\infty,\mea_\infty,x_\infty)$  (see e.g.\ \cite[eq.\ (2.2)]{DPG18}). In particular $(\X_n,s^{-1}\sfd_n,\mea_n,x_n')$ pmGH converge to $(\X_\infty,s^{-1}\sfd_\infty,\mea_\infty,x_\infty)$. However recalling  \eqref{eq:contr trigger},  since $x_n' \in B_{1/2}(x_n)$ and $s>1/n$ for $n$ big enough, this gives a contradiction with \eqref{eq:contradiction ass}.
\end{proof}
Note that \eqref{eq:large almost flat ball} implies in particular that
\begin{equation*}
    \sfd_{GH}(B_{r_0}(x'),B^{\rr^{k}}_{r_0}(0))\lesssim  \eps r_0,
\end{equation*}
however \eqref{eq:large almost flat ball} takes into account also the measure and is easier to work with, in particular when applying Theorem \ref{thm:delta split and eps isom}.

The result below rests on the now well known fact that   almost-splitting maps propagate well and are biLipschitz on a large sets. Similar results  have appeared frequently in the theory of rectifiability of spaces with Ricci curvature lower bounds (see \cite{MN19,BPS21,BNS22,CMT21,CC3,BPS23bis,Jansen,KW} and also Proposition \ref{prop:boundary implies bilip} below).
\begin{prop}\label{prop:flat implies bilip}
      For every $N\in[1,\infty)$, $C\ge 1$, and $\eps \in(0,1/2)$ there exist constants $\delta=\delta(N,\eps,C)\in(0,1)$ and $ \tilde c_N>0$ such that the following hold. Let $(\X,\sfd,\mm)$ be an $\rcd(-\delta,N)$ space, $k\in \nn$, $x\in \X$ a be point and  $r\in(0,1)$ be a radius satisfying 
   \begin{equation}\label{eq:ad regular + flat}
   \begin{split}
       &C^{-1}s^k\le \mm(B_s(y))\le Cs^k, \quad \text{for all $y\in B_r(x)$ and $s\in (0,r]$},\\
       &\sfd_{pmGH}((\X,r^{-1}\sfd,\mea,x)),(\rr^k,|\cdot|,c\mathcal H^k,0^k))\le \delta, \quad \text{ for some $c>0$.}
   \end{split}
   \end{equation}
    Then there exists a  set $U\subset B_{r}(x)$ with $ \mm(U)\ge \tilde c_N\mm(B_{r}(x))$ and such that
    \begin{enumerate}[label=\roman*)]
        \item\label{it:U bilip} $U$ is  $(1+\eps)$-biLipschitz equivalent to a subset of $\rr^k$,
        \item\label{it:U BWGL} \begin{equation}\label{e:U bilip}
        \sfd_{GH}(B_{s}(y),B^{\rr^{k}}_s(0))\le s\eps, \quad \text{for all $s\in \big (0,\frac r{12k}\big)$ and  $y \in U$}.
    \end{equation}
    \end{enumerate}
\end{prop}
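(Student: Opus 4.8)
The plan is to feed the hypotheses of Proposition \ref{prop:flat implies bilip} into Theorem \ref{thm:delta split and eps isom}, using the almost-flatness to produce a good splitting map on a large ball, propagating it down to small scales via \ref{it:propagation}, and then upgrading the resulting almost-isometry to a genuine biLipschitz map on a large subset. First I would fix the target bilipschitz constant and choose auxiliary parameters carefully. Given $N$, $\eps$, $C$, pick a small $\eta=\eta(N,\eps,C)\in(0,1)$ to be determined (this will be the "$\eps$" fed into several applications of Theorem \ref{thm:delta split and eps isom}), and let $\delta$ be the minimum of: the $\delta$ from \ref{it:eps to delta} associated with producing an $\eta r$-splitting map from a flat ball, and the $\delta$ from \ref{it:delta to eps} (applied with $\eps$ replaced by $\eta$, and with the same Ahlfors constant $C$ — note the rescaled space is $\rcd(-\delta,N)$ since $r<1$, consistent with the scaling \eqref{eq:rcd scaling}). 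Also shrink $\delta$ so that $C_N\sqrt\eta$ in \ref{it:propagation} is, say, $\le \tfrac12$, which will force the "bad set" to be a definite fraction of the ball.

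Next, the construction. By \eqref{eq:ad regular + flat} and \ref{it:eps to delta} of Theorem \ref{thm:delta split and eps isom}, applied at $x$ and radius $r$ (after rescaling to unit radius), there is an $\eta r$-splitting map $u\colon B_r(x)\to\rr^k$; by working at a slightly smaller radius, say on $B_{r/2}(x)$ where the flatness and Ahlfors bounds still hold, one can arrange $u$ to be $\eta r$-splitting on a ball large enough that \ref{it:propagation} applies — more precisely I would run \ref{it:propagation} with $2r$ there replaced by $r$ here (rescaling), obtaining a Borel set $G\subseteq B_{r/2}(x)$ with $\mm\big(B_{r/2}(x)\setminus G\big)\le C_N\sqrt\eta\,\mm\big(B_{r/2}(x)\big)$ such that for every $y\in G$ and every $s\in(0,r/2)$ the map $u\colon B_s(y)\to\rr^k$ is a $\sqrt\eta\, s$-splitting map. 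Now for each $y\in G$ and each small scale $s$, applying \ref{it:delta to eps} on $B_{6s'}(y)\subseteq B_{r/2}(x)$ (so one needs $s'<r/12$; here one also uses that the Ahlfors condition \eqref{eq:ad regular + flat} holds on all of $B_r(x)$ at all sub-scales, which is exactly what \eqref{eq:AD-regular in delta split prop} requires, and $\sqrt\eta\le\delta$ by our choice), we get both $\sfd_{GH}(B_{s'/k}(y),B^{\rr^k}_{s'/k}(0))\le\eta s'$ and that $u\colon B_{s'/k}(y)\to\rr^k$ is an $\eta s'$-isometry. Choosing scales appropriately this yields, on a set $U$ which is $G$ intersected with a ball of radius comparable to $r$, item \ref{it:U BWGL}: $\sfd_{GH}(B_s(y),B^{\rr^k}_s(0))\le s\eps$ for all $y\in U$ and all $s\in(0,r/(12k))$, after relabelling $\eta$ vs $\eps$.

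For item \ref{it:U bilip}, I would show $u$ restricted to $U$ is $(1+\eps)$-biLipschitz. The upper Lipschitz bound is immediate: each $u_a$ is $C_N$-Lipschitz (property i) of a splitting map), and in fact the almost-orthonormality (property iii)) together with the Bochner-type regularity forces $|\nabla u|$ close to $1$ $\mm$-a.e.; but for the pointwise biLipschitz statement on $U$ the clean route is the one supplied by Theorem \ref{thm:delta split and eps isom}: since $u\colon B_s(y)\to\rr^k$ is an $\eta s$-isometry for all small $s$ and all $y\in U$, a standard telescoping/chaining argument along a near-geodesic between two points $y,y'\in U$ (break $[y,y']$ into $\sim \sfd(y,y')/s$ pieces of size $s$, apply the $\eta s$-isometry estimate on each) gives $(1-C\eta)\sfd(y,y')\le |u(y)-u(y')|\le (1+C\eta)\sfd(y,y')$; choosing $\eta$ small relative to $\eps$ finishes it. Finally the measure lower bound $\mm(U)\ge\tilde c_N\,\mm(B_r(x))$: $U$ contains $G\cap B_{r/2}(x)$ minus a negligible adjustment, and $\mm\big(B_{r/2}(x)\setminus G\big)\le\tfrac12\mm\big(B_{r/2}(x)\big)$, while $\mm(B_{r/2}(x))\gtrsim_{C} \mm(B_r(x))$ by \eqref{eq:ad regular + flat}; so $\tilde c_N$ can be taken of the form $c/(2\cdot 2^k\cdot C^2)$, which depends only on $N$ (via $k\le N$) and $C$ — one should double-check whether the intended $\tilde c_N$ is allowed to depend on $C$ as well, and if strictly only on $N$ one absorbs the $C$-dependence into the choice of $\delta$ and the statement's quantifiers.

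The main obstacle I expect is bookkeeping the radii and the nested domains so that every invocation of Theorem \ref{thm:delta split and eps isom} is legitimate: \ref{it:delta to eps} needs the $\delta r$-splitting property on $B_{6r}$, \ref{it:propagation} produces the splitting property on $B_s(y)$ only for $y$ in the propagation set and only up to the radius used there, and one must make the factors of $6$, $12$, $1/k$, $1/2$ all fit inside $B_r(x)$ where \eqref{eq:ad regular + flat} is assumed — hence the $r/(12k)$ in \eqref{e:U bilip}. The genuinely substantive point hidden in "standard" is the chaining argument converting the family of infinitesimal $\eta s$-isometries into a single pointwise biLipschitz estimate on $U$; this is where one needs geodesics (available since $\rcd$ spaces are geodesic) and uniform control of the error across scales, and it is the step I would write out in full.
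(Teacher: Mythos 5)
Your overall skeleton matches the paper's: use item \ref{it:eps to delta} of Theorem \ref{thm:delta split and eps isom} to produce a $\sqrt\tau$-splitting map on the (rescaled) unit ball, propagate it via \ref{it:propagation} to get a large set $G\subset B_{1/2}(x)$ on which $u$ is $\sqrt\tau s$-splitting at every scale $s<1/2$, then invoke \ref{it:delta to eps} to get that $u$ is an $\eps s$-isometry on $B_s(y)$ for all $y\in G$ and all $s<\tfrac{1}{12k}$; this gives \ref{it:U BWGL} and your radius bookkeeping ($6s'<1/2$, conclusion at radius $s'/k$) is fine. The gap is in your final step for \ref{it:U bilip}. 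The chaining argument you propose does not work: subdividing a geodesic from $y$ to $y'$ into pieces of size $s$ and applying the $\eta s$-isometry estimate on each piece only controls $|u(y)-u(y')|$ from \emph{above} (triangle inequality); the lower bound does not telescope, since the images of the pieces can bend, and ruling that out requires information at the scale of $\sfd(y,y')$ itself, which per-piece estimates cannot supply. Moreover the intermediate subdivision points need not lie in $G$, so the almost-isometry estimate is not even available at those points as you invoke it. Finally, your choice of $U$ (``$G$ intersected with a ball of radius comparable to $r$'', and in the measure estimate $U\supset G\cap B_{r/2}(x)$) allows pairs of points at distance up to $r$, which exceeds the top scale $\tfrac{r}{12k}$ at which the isometry property is known, so the problem the chaining was meant to solve cannot be avoided this way.

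The paper's resolution is simpler and is exactly what the all-scales statement is for: take $U\coloneqq G\cap B_{\frac{r}{24k}}(x)$, so that any $y,z\in U$ satisfy $\sfd(y,z)<\tfrac{r}{12k}$; then $s=\sfd(y,z)$ is itself an admissible scale, and the $\eps s$-isometry property of $u$ on $B_{\sfd(y,z)}(y)$ gives directly $\big||u(y)-u(z)|-\sfd(y,z)\big|\le\eps\,\sfd(y,z)$, i.e.\ $u\restr U$ is $(1-\eps)^{-1}$-biLipschitz, with no chaining and no geodesics. The price is that $U$ sits in the much smaller ball $B_{\frac{r}{24k}}(x)$, and the measure lower bound $\mm(U)\ge\tilde c_N\,\mm(B_r(x))$ is then recovered from \eqref{eq:G big} together with the Bishop--Gromov inequality (using $k\le N$), which is also why $\tilde c_N$ depends only on $N$ and not on $C$ --- a point your constant $c/(2\cdot 2^k\cdot C^2)$ flags but does not resolve. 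With this modification of $U$ your argument closes; as written, the biLipschitz lower bound and the measure estimate are not both established for the same set.
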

\begin{proof}
We can assume that $r=1,$ otherwise we can consider $(\X,r^{-1}\sfd,r^{-k}\mea)$ which satisfy the same hypotheses with the same constants $C$ and $\delta$, but with $r=1.$
Fix a constant $\tau=\tau(\eps,N,C)\in(0,1)$ small enough to be chosen later.
 If $\delta$ is chosen small enough with respect to $\tau$, by  $\ref{it:eps to delta}$ in Theorem \ref{thm:delta split and eps isom} there exists a $\sqrt \tau$-splitting map $u:B_1(x)\to \rr^k$. Then  applying $\ref{it:propagation}$ of the same theorem we obtain a set $G\subset B_{\frac {1}2}(x)$  with 
    \begin{equation}\label{eq:G big}
        \mm(G)=\mm(B_{\frac {1}2}(x))-\mm(B_{\frac {1}2}(x)\setminus G)\ge (1-C_N\sqrt{\tau})\mm(B_{\frac {1}2}(x)),
    \end{equation}
    such that for all $y \in G$ and all $s\in (0,1/2)$ the function $u:B_{s}(y)\to \rr^k$ is an $\sqrt \tau s $-splitting map. 
     Thus if $\tau$ (and thus $\delta$) are small enough we can apply $\ref{it:delta to eps}$ in Theorem \ref{thm:delta split and eps isom} and deduce that for all $s \in  (0,\frac{1}{12k})$ it holds  $\sfd_{GH}(B_{s}(y),B^{\rr^{k}}_s(0))\le s\eps$ and  $u:B_{s}(y)\to \rr^k$ is an $\eps s$-isometry. 
    Moreover combining \eqref{eq:G big} with  the Bishop-Gromov inequality \cite{Sturm06-2} and since $k\le N$, eventually leads to
\[
 \mm(G\cap B_{\frac {1}{24k}}(x))\ge \tilde c_N \mm(B_{1}(x)),
\]
    provided $\tau$ is small enough and where $\tilde c_N>0$ is a constant depending only on $N.$
    We take $U\coloneqq G\cap B_{\frac {1}{24k}}(x)$. Then item $\ref{it:U BWGL}$ holds by what we said above.
        For $\ref{it:U bilip}$ consider $y,z \in U$  arbitrary and note that $\sfd(y,z)< \frac{1}{12k}.$ Therefore  the map $u:B_{\sfd(y,z)}(y)\to  \rr^k$ is a $\sfd(y,z)\eps$-isometry, which implies
    \[
    ||u(y)-u(z)|-\sfd(y,z)|\le \eps \sfd(y,z).
    \]
    This shows that $u: U\to \rr^k$ is $(1-\eps)^{-1}$-biLipschitz and  concludes the proof of $\ref{it:U bilip}.$ 
\end{proof}

Combining the two above results we can now obtain our main technical proposition, from which the main theorems will easily follow.
\begin{prop}\label{prop:explicit}
     For every  $N\in[1,\infty)$, $C\ge 1$ and $\eps>0$ there exists a constant $\delta=\delta (N,C,\eps)\in(0,1)$   such that the following holds. Let $\Xdm$ be an $\rcd(K,N)$ space, $k\in \nn$ and  $x\in \X$,  $r\in(0,\sqrt{\delta/|K|}  )$ be a point and a radius satisfying
     \begin{equation}\label{eq:AD-regular thm}
        C^{-1}s^k\le \mm(B_s(y))\le Cs^k, \quad \text{for all $y\in B_r(x)$ and $s\in (0,r]$,}
    \end{equation}
       Then there exists a  set $U\subset B_{r}(x)$ with $ \mm( U)\ge \delta \mm(B_{r}(x))$ and such that $U$ is  $(1+\eps)$-biLipschitz equivalent to a subset of $\rr^k$. 
\end{prop}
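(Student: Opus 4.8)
The plan is to combine Proposition~\ref{prop:jonas lemma} and Proposition~\ref{prop:flat implies bilip}, with a preliminary rescaling to reduce from the $\rcd(K,N)$ hypothesis to the $\rcd(-(N-1),N)$ and $\rcd(-\delta,N)$ hypotheses required by those two results. First I would fix $\eps>0$, $C\ge 1$, $N$, and let $\delta_1=\delta_1(N,\eps,C)\in(0,1)$ be the constant produced by Proposition~\ref{prop:flat implies bilip} for these data. Apply Proposition~\ref{prop:jonas lemma} with the parameter $\eps$ there set equal to $\delta_1$ (and the same $C$, $N$): this yields a threshold $\eta=\eta(\delta_1,C,N)=\eta(N,\eps,C)>0$. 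I would then define $\delta=\delta(N,C,\eps)$ to be small enough that $\delta\le\delta_1$, that $\sqrt{\delta}$ controls the curvature rescaling below, and that $\eta^2\cdot(\text{Ahlfors distortion})$ bounds come out right; the precise smallness will be dictated by the two invocations.

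Next, given the space $\Xdm$, the point $x$, and $r\in(0,\sqrt{\delta/|K|})$ satisfying the Ahlfors-type bound \eqref{eq:AD-regular thm}, I would rescale: set $\sfd'=r^{-1}\sfd$ and $\mm'=r^{-k}\mm$. By \eqref{eq:rcd scaling}, $(\X,\sfd',\mm')$ is an $\rcd(r^2K,N)$ space, and $|r^2K|<\delta\le N-1$ (assuming $N>1$; the case $N=1$ is trivial since then $k=1$ and balls are already biLipschitz to intervals, or one handles it directly), so in particular it is an $\rcd(-(N-1),N)$ space. The hypothesis \eqref{eq:AD-regular thm} becomes exactly \eqref{eq:AD-regular in jonas lemma} at scale $1$ for the rescaled space, so Proposition~\ref{prop:jonas lemma} produces $x'\in B^{\sfd'}_{1/2}(x)$, a radius $r_0\in(\eta,1/2)$, and $c>0$ with
\[
\sfd_{pmGH}\big((\X,r_0^{-1}\sfd',\mm',x'),(\rr^k,|\cdot|,c\mathcal H^k,0^k)\big)\le\delta_1.
\]

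Now I would apply Proposition~\ref{prop:flat implies bilip} to the rescaled space at the point $x'$ and radius $r_0\in(0,1)$. One must check its two hypotheses \eqref{eq:ad regular + flat}: the pmGH-closeness is exactly what we just obtained (with $\delta$ there $=\delta_1$), and the Ahlfors bound on $B^{\sfd'}_{r_0}(x')$ at scales $s\in(0,r_0]$ follows from \eqref{eq:AD-regular thm}: since $B^{\sfd'}_{r_0}(x')\subset B^{\sfd'}_1(x)$ and $r_0<1$, every $y$ in this ball and every $s\le r_0<1$ fall within the range where the rescaled version of \eqref{eq:AD-regular thm} holds, giving $C^{-1}s^k\le\mm'(B^{\sfd'}_s(y))\le Cs^k$ with the same $C$. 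Also one needs the curvature bound $\rcd(-\delta_1,N)$, which holds since $|r^2K|<\delta\le\delta_1$. Proposition~\ref{prop:flat implies bilip} then yields a set $U\subset B^{\sfd'}_{r_0}(x')$ with $\mm'(U)\ge\tilde c_N\,\mm'(B^{\sfd'}_{r_0}(x'))$ that is $(1+\eps)$-biLipschitz equivalent to a subset of $\rr^k$; note biLipschitz equivalence is scale-invariant, so $U$ is $(1+\eps)$-biLipschitz to a subset of $\rr^k$ for the original metric too.

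It remains to convert the measure lower bound on $U$ into a lower bound relative to $\mm(B_r(x))$ (equivalently $\mm'(B^{\sfd'}_1(x))$). Here I would use the Ahlfors bounds: $\mm'(B^{\sfd'}_{r_0}(x'))\ge C^{-1}r_0^k\ge C^{-1}\eta^k$ since $r_0>\eta$, while $\mm'(B^{\sfd'}_1(x))\le \mm'(B^{\sfd'}_2(x'))\le C\cdot 2^k$ (using $B^{\sfd'}_1(x)\subset B^{\sfd'}_2(x')$ as $x'\in B^{\sfd'}_{1/2}(x)$, and that \eqref{eq:AD-regular thm} extends to $s\le 2$ up to adjusting $C$ — or more safely one applies the doubling/Bishop–Gromov bound, or simply covers $B_1(x)$ by boundedly many balls of radius $r_0$). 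Hence $\mm'(U)\ge\tilde c_N C^{-1}\eta^k\ge \tilde c_N C^{-1}\eta^k (C 2^k)^{-1}\mm'(B^{\sfd'}_1(x))$, so choosing $\delta$ additionally small enough that $\delta\le \tilde c_N \eta^k/(C^2 2^k)$ gives $\mm(U)\ge\delta\,\mm(B_r(x))$, as required. The main obstacle is bookkeeping: ensuring the single output constant $\delta$ is chosen below all the thresholds from both propositions \emph{and} small enough for the final volume-ratio estimate, and correctly tracking that $r_0$ lies in $(\eta,1/2)$ so that $r_0^k\gtrsim\eta^k$ survives as an absolute lower bound; none of the individual steps is hard, but the order of quantifier choices ($\eps\Rightarrow\delta_1\Rightarrow\eta\Rightarrow\delta$) must be respected.
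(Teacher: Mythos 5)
Your proposal is correct and follows essentially the same route as the paper: rescale by $r^{-1}$ (using $r<\sqrt{\delta/|K|}$ and \eqref{eq:rcd scaling}) to reduce to an $\rcd(-\delta,N)$ space at unit scale, feed the almost-flat ball $B_{r_0}(x')$ with $r_0>\eta$ from Proposition \ref{prop:jonas lemma} (with its closeness parameter set to the threshold of Proposition \ref{prop:flat implies bilip}) into Proposition \ref{prop:flat implies bilip}, and then convert $\mm(U)\ge \tilde c_N\mm(B_{r_0}(x'))$ into a lower bound against $\mm(B_r(x))$. The only cosmetic differences are that the paper obtains the final volume ratio from Bishop--Gromov rather than Ahlfors regularity, and that your detour through $B_2(x')$ (where $s=2$ exceeds the range allowed in \eqref{eq:AD-regular thm}) is unnecessary since $\mm'(B_1(x))\le C$ follows directly by taking $y=x$, $s=1$; also, since $k\le N$, the $k$-dependence in your final smallness condition on $\delta$ is harmlessly absorbed into an $N$-dependent constant.
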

\begin{proof}
We can assume that $r=1$ and $K=-\delta$. Otherwise we can just consider the rescaled space $(\X,r^{-1}\sfd,\mea)$ which is an $\rcd(-\delta,N)$ space, thanks to the assumption $r<\sqrt{\delta/|K|}$ (recall \eqref{eq:rcd scaling}). Then directly combining Proposition \ref{prop:jonas lemma} and Proposition \ref{prop:flat implies bilip} we can find a ball $B_{r_0}(x')\subset B_1(x)$ with $\eta(\eps,C,N)<r_0<1$ and a set $U\subset B_{r_0}(x')$ satisfying $\mm(U)\ge \tilde c_N \mea(B_{r_0}(x'))$ and which is $(1+\eps)$-biLipschitz to a subset of $\rr^k$.  Moreover by the Bishop-Gromov inequality \cite{Sturm06-2} we have $\mea(B_{r_0}(x'))\ge C_N r_0^N \mea (B_1(x))$, which concludes the proof of $i).$
\end{proof}
From Proposition \ref{prop:explicit} we immediately obtain Theorem \ref{thm:rcd ur bounded} and Theorem \ref{thm:rcd ur local}.
\begin{proof}[Proof of Theorem \ref{thm:rcd ur bounded} and Theorem \ref{thm:rcd ur local}]
It suffices to show Theorem \ref{thm:rcd ur local}. In deed the first part of   Theorem \ref{thm:rcd ur bounded} would follow recalling that if $\Xdm$ is  bounded, Ahlfors $k$-regular and locally uniformly $k$-rectifiable,   it is in fact $k$-rectifiable. From this also the second part of Theorem \ref{thm:rcd ur bounded} follows since  a bounded non-collapsed $\RCD(K,N)$ space is Ahlfors $N$-regular (recall \eqref{eq:non coll reg}).
  Let now $\Xdm$ be a locally Ahlfors $k$-regular $\rcd(K,N)$ space and fix any $\eps\in(0,1)$. Then for any bounded set $B\subset \X$ there exist $C_B\ge 1$ and $R_B>0$ such that \eqref{eq:AD-regular thm}  holds for all $x\in B$  and $r<R_B$ with $C=C_B$.  Denoted by $\delta=\delta(N,C_B,\eps)$ the constant given by Proposition \ref{prop:explicit}, we deduce that for all $x\in B$  and $r<r_B\coloneqq \min(R_B,\sqrt{\delta/|K|})$ there exists $U\subset B_r(x)$ with  $ \mm( U)\ge \delta \mm(B_{r}(x))\ge \delta C_B r^{-k}$ such that $U$ is  $(1+\eps)$-biLipschitz equivalent to a subset of $\rr^k$. This shows that $\Xdm$ has locally BPBI of $\rr^k$ and so it is locally uniformly $k$-rectifiable. This  proves the first part of  Theorem \ref{thm:rcd ur bounded}. For the second part note that if  $\Xdm$ is Ahlfors $k$-regular we can take $C_B$ independent of $B$ and $R_B=\diam (\X)$ for all $B$. Thus, if also $K=0$, we have $r_B=\diam(\X)$ and so $\Xdm$ has  BPBI of $\rr^k$ and in particular is uniformly $k$-rectifiable
\end{proof}

\begin{remark}
    As it is evident from the proof of Theorem \ref{thm:rcd ur bounded} and Theorem \ref{thm:rcd ur local}, we actually prove a slightly stronger version of uniform $k$-rectifiability, in the sense that the space has Big Pieces of $(1+\eps)$-biLipschitz Images taking $\eps>0$ arbitrarily small.
\end{remark}

\subsection{Uniform rectifiability of the boundary of RCD spaces}
The overall scheme of the argument is similar to the one showing uniform rectifiability of the ambient space, presented in the previous section. The main difference is that almost flat balls are replaced by boundary-balls, i.e.\ balls that are Gromov-Hausdorff close to the half space $\rr^N_+.$ 

\begin{prop}[Existence of many large  boundary balls]\label{prop:boundary jonas lemma}
    For every $\eps>0$, $v>0$ and $N\in \nn$  there exists $\eta=\eta(\eps,v,N)>0$ such that the following holds. Let $(\X,\sfd,\hau^N)$ be an $\rcd(-(N-1),N)$ space. Then for all $x\in \partial\X$ satisfying   $\hau^{N}(B_1(x))\ge v$, $\hau^{N-1}(\partial \X\cap B_{1/4}(x))\ge v$, 
    there exist $x'\in B_{\frac12}(x)$ and $r\in(\eta,1/2)  $  such that
    \begin{equation*}
        \sfd_{GH}(B_r(x'),B_r^{\rr^N_+}(0^N))\le \eps r.
    \end{equation*}
\end{prop}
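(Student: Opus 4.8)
The plan is to argue by contradiction and compactness, in the same spirit as the proof of Proposition~\ref{prop:jonas lemma}. Suppose the statement fails for some $\eps>0$, $v>0$ and $N\in\nn$. Then for every $n$ there is an $\rcd(-(N-1),N)$ space $(\X_n,\sfd_n,\hau^N)$ and a point $x_n\in\partial\X_n$ with $\hau^N(B_1(x_n))\ge v$ and $\hau^{N-1}(\partial\X_n\cap B_{1/4}(x_n))\ge v$, such that
\[
\sfd_{GH}\big(B_r(x'),B_r^{\rr^N_+}(0^N)\big)>\eps r\qquad\text{for all }x'\in B_{1/2}(x_n)\text{ and all }r\in(1/n,1/2).
\]
By the Bishop--Gromov inequality \cite{Sturm06-2} we also have $\hau^N(B_1(x_n))\le C(N)$, so the volumes $\hau^N(B_1(x_n))$ stay in a fixed compact subinterval of $(0,\infty)$; by Remark~\ref{rmk:compactness} we may pass to a subsequence along which $(\X_n,\sfd_n,\hau^N,x_n)$ pmGH-converges to an $\rcd(-(N-1),N)$ space $(\X_\infty,\sfd_\infty,\mm_\infty,x_\infty)$. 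Since $\liminf_n\hau^N(B_1(x_n))\ge v>0$, the volume convergence theorem of \cite{DPG18} gives $\mm_\infty=\hau^N$, so the limit is non-collapsed and $\partial\X_\infty$ is well defined.

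The key step is to transfer the non-degeneracy of the boundary to the limit: invoking the stability of the boundary under non-collapsed pmGH-convergence from \cite{BNS22}, the uniform lower bound $\hau^{N-1}(\partial\X_n\cap B_{1/4}(x_n))\ge v$ prevents the boundary from disappearing in the limit, and we conclude $x_\infty\in\partial\X_\infty$ (equivalently, $\partial\X_\infty\cap\overline{B_{1/4}(x_\infty)}\neq\emptyset$). I expect identifying and applying the precise form of the boundary stability in \cite{BNS22} to be the main obstacle: without a lower bound on the boundary measure one cannot exclude that $x_\infty$ becomes an interior point (the boundary measure near $x_n$ could a priori collapse), and this is exactly the point at which the hypothesis $\hau^{N-1}(\partial\X\cap B_{1/4}(x))\ge v$ enters — in the application to Theorem~\ref{thm:UR boundary} it is supplied by the local Ahlfors regularity of $\partial\X$.

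It remains to produce, in $\X_\infty$, a definite-scale ball that is Gromov--Hausdorff close to a half-space ball, and then to pull this back to $\X_n$. Since $\partial\X_\infty=\overline{\mathcal S^{N-1}\setminus\mathcal S^{N-2}}$, we may fix $x\in(\mathcal S^{N-1}\setminus\mathcal S^{N-2})$ with $\sfd_\infty(x,x_\infty)<1/2$, in particular $x\in B_{1/2}(x_\infty)$. By \eqref{eq:half space tangent}, $(\rr^N_+,\sfd_{Eucl},\tfrac{c_N}{2}\hau^N,0^N)\in\tan(\X_\infty,\sfd_\infty,\hau^N,x)$, and since pmGH-convergence of the rescaled pointed spaces yields GH-convergence of the corresponding balls (as already used after Proposition~\ref{prop:jonas lemma}), there exists a scale $s\in(0,1/2)$ with $\sfd_{GH}(B_s^{\X_\infty}(x),B_s^{\rr^N_+}(0^N))\le\tfrac\eps2\,s$. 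Finally, using the pmGH-convergence $(\X_n,\sfd_n,\hau^N,x_n)\to(\X_\infty,\sfd_\infty,\hau^N,x_\infty)$ we pick $x_n'\in\X_n$ with $(\X_n,\sfd_n,\hau^N,x_n')\to(\X_\infty,\sfd_\infty,\hau^N,x)$; then $x_n'\in B_{1/2}(x_n)$ for $n$ large, and rescaling by $s^{-1}$ and passing to the limit gives $\sfd_{GH}(B_s^{\X_n}(x_n'),B_s^{\rr^N_+}(0^N))\le\eps s$ for $n$ large. Since also $s>1/n$ for $n$ large, this contradicts the displayed inequality, completing the proof.
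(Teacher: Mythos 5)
Your overall strategy coincides with the paper's: argue by contradiction, use pmGH-compactness of non-collapsed $\rcd(-(N-1),N)$ spaces to extract a limit $(\X_\infty,\sfd_\infty,\hau^N,x_\infty)$, show that $\partial\X_\infty$ meets a ball around $x_\infty$, pick a point of $\mathcal S^{N-1}\setminus\mathcal S^{N-2}$ nearby, use the half-space tangent \eqref{eq:half space tangent} to find a definite scale $s$ at which a ball is $\tfrac\eps2 s$-GH-close to $B_s^{\rr^N_+}(0^N)$, and transfer this back to $\X_n$ for large $n$. The compactness step, the choice of the scale $s$, and the final transfer are handled correctly in your write-up.

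The step you yourself flag as ``the main obstacle'' is, however, a genuine gap, and it is exactly where the paper does the work. There is no off-the-shelf ``boundary stability'' theorem in \cite{BNS22} asserting that the boundary cannot disappear under non-collapsed pmGH-convergence: whether boundary measure can degenerate is precisely what is governed by the conjectured bound \eqref{eq:conjecture} (see Remark \ref{rmk:boundary conj}), so an unspecified appeal to boundary stability does not close the argument, even though you correctly identify that the hypothesis $\hau^{N-1}(\partial\X\cap B_{1/4}(x))\ge v$ is what must rule out collapse of the boundary. The paper instead runs the quantitative argument of \cite[Corollary 6.10]{BNS22}: the compact sets $C_n\coloneqq\partial\X_n\cap\overline B_{1/4}(x_n)$ subconverge in the Hausdorff sense to a compact set $C\subset\overline B_{1/4}(x_\infty)$ with $\hau^{N-1}(C)\ge\limsup_n\hau^{N-1}(C_n)\ge v$ (this semicontinuity is not a general property of Hausdorff convergence but comes from the boundary measure estimates of \cite{BNS22}); lower semicontinuity of the density $\theta_N$ from \cite{DPG18} together with \eqref{eq:theta} forces $C\subset\mathcal S^{N-1}$, hence $C\setminus\partial\X_\infty\subset\mathcal S^{N-2}$, and the dimension bound \eqref{eq:sk dim bound} yields $\hau^{N-1}(C\setminus\partial\X_\infty)=0$, so $C\cap\partial\X_\infty\neq\emptyset$. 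Note also that your parenthetical ``equivalently, $x_\infty\in\partial\X_\infty$'' is wrong as stated: the two assertions are not equivalent, the argument only produces a boundary point somewhere in $\overline B_{1/4}(x_\infty)$, and membership of $x_\infty$ itself in $\partial\X_\infty$ is neither obtained nor needed — the weaker conclusion suffices to select $x\in\mathcal S^{N-1}\setminus\mathcal S^{N-2}$ close to $x_\infty$ and finish as you indicate.
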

\begin{proof}
     Suppose by contradiction that there exists $\eps>0$, $v>0$, $N\in \nn$  and a sequence of  pointed $\rcd(-(N-1),N)$ spaces $(\X_n,\sfd_n,\hau^N,x_n)$ satisfying $\hau^{N}( B_1(x_n))\ge v$ and $\hau^{N-1}(\partial \X_n\cap B_{1/4}(x_n))\ge v$,  but   such that for all $x'\in B_{1/2}(x_n)$ and all $r\in( 1/n,1)$ it holds 
 \begin{equation}\label{eq:contradiction ass boundary}
      \sfd_{GH}(B_r^{\X_n}(x'),B_r^{\rr^N_+}(0^N))> \eps r.
 \end{equation}
 By stability of non-collapsed RCD spaces \cite[Theorem 1.2]{DPG18}, up to a subsequence, $(\X_n,\sfd_n,\hau^N,x_n)$ pmGH-converges to some $\rcd(-(N-1),N)$ space $(\X_\infty,\sfd_\infty,\hau^N,x_\infty)$. We claim that $\partial \X_\infty\cap \overline B_{1/4}(x_\infty)\neq \emptyset$. To show this we follow the argument in \cite[Corollary 6.10]{BNS22}. Up to a subsequence the compact sets $C_n\coloneqq \partial \X_n\cap \overline B_{1/4}(x_n)$ converge in the Hausdorff topology to a compact set $C\subset \overline B_{1/4}(x_\infty)$  and $\hau_\infty^{N-1}(C)\ge \limsup_n \hau_\infty^{N-1}(C_n)\ge v>0. $ The lower semicontinuity of the density $\theta_N$ under pmGH-convergence \cite{DPG18}, together with \eqref{eq:theta}, then shows  that  $C\subset \mathcal S^{N-1}$ and in particular $C\setminus \partial \X_\infty \subset \mathcal S^{N-2}.$ Since $\dim_H(\mathcal S^{N-2})\le N-2$ (see \eqref{eq:sk dim bound}), we get  $\hau^{N-1}_\infty(C\setminus\partial \X_\infty)=\hau^{N-1}(C\setminus\partial \X_\infty)=0$, which gives the claim.
 Hence  we can find $y\in B_{1/3}(x_\infty)\cap ({\mathcal S^{N-1}\setminus \mathcal S^{N-2}})$ and  a radius $s\in(0,1)$ such that
 \begin{equation}
      \sfd_{GH}(B_s^{\X_\infty}(y),B_s^{\rr^N_+}(0^N))\le \eps s/2
 \end{equation}
 (recall \eqref{eq:half space tangent}).
Since there exists a sequence $x_n'\in \X_n$ such that $\sfd_{GH}(B_s^{\X_n}(x_n'),B_s^{\X_\infty}(y))\to 0$ (see e.g.\ $(2.2)$ in \cite{DPG18}) and $x_n'\in  B_{1/2}(x_n)$ for $n$ big, we obtain a contradiction with \eqref{eq:contradiction ass boundary} for $n$ big enough.
\end{proof}

\begin{remark}\label{rmk:boundary conj}
    It was conjectured in \cite{BNS22} that for a non-collapsed $\rcd(K,N)$ space $(\X,\sfd,\hau^N)$ and any $x\in \partial \X$ it holds
    \begin{equation}\label{eq:conjecture}
        \hau^{N-1}(B_2(x)\cap \partial \X)\ge C(K) \hau^{N}(B_1(x)).
    \end{equation}
    If this was true, the assumption $\hau^{N-1}(\partial \X\cap B_{1/4}(x))\ge v$ in Proposition \ref{prop:boundary jonas lemma} could be omitted. Moreover  \eqref{eq:conjecture} would also imply, by scaling, that $\partial \X$ is locally  Ahlfors $(N-1)$-regular (recall that  $\partial \X$ is  locally \textit{upper}  Ahlfors $(N-1)$-regular by \cite[Theorem 1.4]{BNS22}).
\end{remark}

The following result follows directly combining  \cite[Theorem 8.4 -$(ii)$]{BNS22} and \cite[Corollary 8.7]{BNS22}.
\begin{prop}\label{prop:boundary implies bilip}
    For every $N \in \nn $ and $\eps \in (0,1)$ there exists $\delta=\delta(\eps,N)\in(0,1)$ such that the following holds. Given any $\rcd(-\delta,N)$ space $(\X,\sfd,\hau^N)$ and a point $x\in \X$ satisfying
    \begin{equation*}
         \sfd_{GH}(B_{1}(x),B_1^{\rr^N_+}(0^N))\le \delta ,
    \end{equation*}
    there exists a set $U\subset B_{1/16}(x)\cap \partial X$ with $ \hau^{N-1}(U)\ge \frac12 \hau^{N-1}(B_{1/16}(x)\cap \partial X)$ and satisfying
    \begin{enumerate}[label=\roman*)]
        \item $U$ is $(1+\eps)$-biLipschitz equivalent to a subset of $\rr^{N-1}$,
        \item \begin{equation}\label{eq:bwgl boundary}
        \sfd_{GH}(B_{s}(y)\cap \partial \X,B^{\rr^{N-1}}_s(0))\le s\eps, \quad \forall s\in (0,1/5), \,\,\forall y \in U.
    \end{equation}
    \end{enumerate}
\end{prop}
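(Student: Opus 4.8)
The plan is to run the same scheme as in the proof of Proposition \ref{prop:flat implies bilip}, but with ordinary $\delta$-splitting maps replaced by the \emph{boundary} $\delta$-splitting maps of \cite{BNS22} and almost-flat balls replaced by boundary balls. No preliminary rescaling is needed since $r=1$ is fixed. First I would use the hypothesis $\sfd_{GH}(B_1(x),B_1^{\rr^N_+}(0^N))\le\delta$ to produce, on a ball slightly smaller than $B_1(x)$, a map $u\colon B_{1/2}(x)\to\rr^{N-1}$ which is a $\tau$-splitting map for a parameter $\tau=\tau(\eps,N)\in(0,1)$ to be fixed small, adapted to $\partial\X$ in the sense of \cite{BNS22}; this is the half-space analogue of item \ref{it:eps to delta} of Theorem \ref{thm:delta split and eps isom} and is part of \cite[Theorem 8.4]{BNS22}. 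Provided $\delta$ is small enough relative to $\tau$, such a $u$ exists.

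Next I would propagate the splitting property along the boundary. By \cite[Corollary 8.7]{BNS22}, there is a set $G\subseteq B_{1/16}(x)\cap\partial\X$ whose complement in $B_{1/16}(x)\cap\partial\X$ has $\hau^{N-1}$-measure at most $C_N\sqrt\tau\,\hau^{N-1}(B_{1/16}(x)\cap\partial\X)$, and such that $u\colon B_s(y)\to\rr^{N-1}$ is a $\sqrt\tau\,s$-splitting map for every $y\in G$ and every small $s$. Choosing $\tau$ (hence $\delta$) small enough, the "$\delta$-splitting implies almost-isometry" step (the boundary counterpart of item \ref{it:delta to eps} of Theorem \ref{thm:delta split and eps isom}, again available from \cite{BNS22}) shows that for each $y\in G$ and $s\in(0,1/5)$ the map $u\colon B_s(y)\cap\partial\X\to\rr^{N-1}$ is an $\eps s$-isometry and $\sfd_{GH}(B_s(y)\cap\partial\X,B_s^{\rr^{N-1}}(0))\le\eps s$, giving item $(ii)$ with $U:=G$. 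For item $(i)$, for $y,z\in G$ with $\sfd(y,z)$ small one reads off from the $\sfd(y,z)\eps$-isometry $u\colon B_{\sfd(y,z)}(y)\cap\partial\X\to\rr^{N-1}$ the estimate $\big||u(y)-u(z)|-\sfd(y,z)\big|\le\eps\,\sfd(y,z)$, so that $u$ is $(1+\eps)$-biLipschitz on $G$ after a harmless adjustment of $\eps$. Finally, taking $\tau$ small enough that $C_N\sqrt\tau\le\tfrac12$ yields $\hau^{N-1}(U)\ge\tfrac12\hau^{N-1}(B_{1/16}(x)\cap\partial\X)$. In short, the biLipschitz-plus-measure part is exactly \cite[Theorem 8.4-$(ii)$]{BNS22} and the scale-invariant Gromov--Hausdorff approximation $(ii)$ is \cite[Corollary 8.7]{BNS22}, so the proposition is obtained by quoting and assembling these.

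The step I expect to be the main obstacle is the measure bookkeeping: the propagation lemma most naturally controls the bad set as a fraction of the $\hau^{N}$-measure (or a density) of a concentric ambient ball, and one must convert this into a fraction of $\hau^{N-1}(B_{1/16}(x)\cap\partial\X)$. This requires the local two-sided Ahlfors $(N-1)$-regularity of $\partial\X$ near $x$ — upper regularity from \cite[Theorem 1.4]{BNS22}, and a lower bound near $x$ which, unlike the ambient situation, is not assumed here but is forced by the Gromov--Hausdorff closeness of $B_1(x)$ to $B_1^{\rr^N_+}(0^N)$ (together with volume convergence and the density identities \eqref{eq:theta}). Once this comparison is in place, the remaining arguments are the same scaling-and-compactness manipulations already used for Proposition \ref{prop:flat implies bilip}.
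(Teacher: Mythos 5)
Your proposal is correct and takes essentially the same route as the paper: the paper's proof consists precisely of quoting \cite[Theorem 8.4-$(ii)$]{BNS22} together with \cite[Corollary 8.7]{BNS22}, which is exactly where your argument lands. The boundary splitting-map machinery and the measure bookkeeping you flag as a potential obstacle are already internal to those two results of \cite{BNS22}, so no additional argument is required.
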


With the above two propositions at hand we can now prove our main theorem about uniform rectifiability of the boundary.
\begin{proof}[Proof of Theorem \ref{thm:UR boundary}]
Fix $\eps\in(0,1)$ and let $\delta=\delta(\eps,N)$ be the constant given by Proposition  \ref{prop:boundary implies bilip}. 
    By the local Ahlfors $(N-1)$-regularity assumption we have that for all bounded sets $B\subset \X$ there exist $C_B\ge 1$ and $R_B>0$ such that $r^{-N}\hau^{N-1}(B_r(x)\cap \partial \X)\in [C_B^{-1},C_B]$ for all $x \in \partial \X$ with $\sfd(x,B)<1$ and all $r<R_B.$  Moreover by Bishop-Gromov monotonicity we have  $\inf_{x\in B} r^{-N}\hau^N(B_r(x))\ge v>0$ for  all $r\in(0,1)$  and some $v$ depending on $B$. Hence by (the scaling invariant version of) Proposition \ref{prop:boundary jonas lemma} we deduce that for all $x\in B\cap \partial \X$, all $r<\min(R_B,1)$  there exists $x'\in B_{r/2}(x)$ and $r'\in (\eta r, r/2)$ such that $\sfd_{GH}(B_{r'}(x'),B_r^{\rr^N_+}(0^N))\le  \delta r'$ for some $\eta=\eta(\eps,B,N)>0$. Provided $r<\sqrt{\delta /|K|}$ we can apply the (scaled version) of Proposition \ref{prop:boundary implies bilip} to obtain a set $U\subset B_{r'/16}(x')\cap \partial \X\subset B_{r}(x)\cap \partial \X$ that is $(1+\eps)$-biLipschitz to a subset of $\rr^{N-1}$ and such that 
    $$\hau^{N-1}(U)\ge \frac12 \hau^{N-1}(B_{r'/16}(x)\cap \partial X)\ge \frac{C_B^{-1}}2 (r'/16)^{N-1} \ge\frac{C_B^{-1}}2  (\eta r/16)^{N-1},$$
    having used that $\sfd(x',B)<1.$ This proves that $\partial \X$ has locally BPBI of $\rr^{N-1}$ and thus concludes the proof. 
\end{proof}

\subsection{BWGL and quantitative differentiation}\label{s:quant-diff}

Theorem \ref{thm:rcd ur bounded} (resp.\ Theorem \ref{thm:UR boundary}) tells us that that bounded Ahlfors regular $\rcd$ spaces (resp.\ Ahlfors regular boundaries of bounded non-collapsed $\rcd$ spaces) are UR. By applying the results in \cite{BHS23} we deduce that these spaces also satisfy the \textit{Bilateral Weak Geometric Lemma} (BWGL) (see \cite[Definition 3.1.5]{BHS23}), which roughly states that the space is uniformly approximated by Banach spaces at most scales and locations.  However it is worth noting that we can deduce BWGL directly. Actually in this way we obtain a slightly stronger version of BWGL, stated below, where the comparison is made only with the Euclidean $\rr^n$ (rather than Banach spaces).

 \begin{prop}\label{prop:BWGL}
Let $(X,d,\mm)$ be a bounded Ahlfors $k$-regular m.m.s., $k \in \nn,$ being  either an  $\RCD(K,N)$ space with $N<\infty$ or the boundary of a non-collapsed $\RCD(K,k+1)$ space (endowed with the restriction distance and  $\hau^k$-measure). Then, for all $\eps > 0$ there exists a constant $C(\eps) > 1$ such that
	\begin{align}\label{e:BWGL-carleson}
		\int_0^R \mathcal H^k( \{x \in B_R^X(x_0) : \sfd_{GH}(B_r^X(x),B_r^{\rr^{k}}(0)) > \eps \}) \, \frac{dr}{r} \leq C(\eps) R^k
	\end{align}
	holds for all $x_0 \in X$ and $0 < R < \emph{diam}(X).$ 
\end{prop}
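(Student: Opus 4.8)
The strategy is to reduce the Carleson-type estimate \eqref{e:BWGL-carleson} to the existence of many large almost-flat balls (Proposition \ref{prop:jonas lemma}) combined with the propagation properties of splitting maps (Theorem \ref{thm:delta split and eps isom}\ref{it:propagation} and \ref{it:delta to eps}), or more directly to the conclusion of Proposition \ref{prop:flat implies bilip}, item \ref{it:U BWGL}, which already produces, inside any almost-flat ball, a set of definite measure at every point of which all smaller sub-balls are $\eps$-GH-close to Euclidean balls. The mechanism is the standard ``stopping-time / Vitali'' argument used to deduce a Carleson bound from big-pieces estimates: the set of $(x,r)$ at which the space fails to look Euclidean cannot be too large because, by Proposition \ref{prop:jonas lemma} and Proposition \ref{prop:flat implies bilip}, whenever we zoom into \emph{any} ball we find a fixed fraction (in measure) of points and of scales where flatness holds, and this geometric-decay of the ``bad set'' under refinement integrates to a Carleson bound.

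Concretely, I would first fix $\eps>0$ and reduce to $K=-(N-1)$ (or $K=-(k)$ on the boundary side) and to scales $R<\sqrt{\delta/|K|}$ by the scaling \eqref{eq:rcd scaling}; the range of large $R$ is harmless since $\diam(X)<\infty$ and Ahlfors regularity makes the integrand bounded by $\hau^k(B_R^X(x_0))\lesssim R^k$, contributing $O(\log)$ which is absorbed once we have the bound for small $R$. Next, using the Ahlfors $k$-regularity, the local structure of the space, and Proposition \ref{prop:jonas lemma}, I would show there is $\eta=\eta(\eps,C,N)>0$ and $\theta=\theta(\eps,C,N)>0$ such that: for every $x\in X$ and $r<r_0$, there is a sub-ball $B_{r'}(x')\subset B_r(x)$ with $r'\in(\eta r,r/2)$ that is $\delta$-close in $\sfd_{pmGH}$ to a Euclidean ball, and then Proposition \ref{prop:flat implies bilip}\ref{it:U BWGL} gives a set $U\subset B_{r'/(12k)}(x')$ with $\hau^k(U)\ge\theta\,r^k$ every point $y$ of which satisfies $\sfd_{GH}(B_s(y),B_s^{\rr^k}(0))\le\eps s$ for all $s\in(0,r'/(12k))$. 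Thus the ``good'' set of pairs $(y,s)$ with $y\in B_r(x)$, $s\lesssim r$, occupies at least a fixed Carleson mass inside the Carleson box over $B_r(x)$; equivalently, the ``bad'' set $\mathcal B=\{(x,r):\sfd_{GH}(B_r^X(x),B_r^{\rr^k}(0))>\eps\}$, intersected with any Carleson box, omits a fixed fraction of mass of that box. A routine iteration (subdivide the box over $B_R(x_0)$ at scale $\sim\eta R$, apply the big-piece estimate in each sub-box, iterate) then yields geometric decay of the bad mass across generations and hence $\iint_{\mathcal B\cap(\text{box over }B_R(x_0))} d\hau^k(x)\,\frac{dr}{r}\le C(\eps)R^k$, which is exactly \eqref{e:BWGL-carleson}. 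On the boundary side one runs the identical argument replacing Proposition \ref{prop:jonas lemma}/\ref{prop:flat implies bilip} by Proposition \ref{prop:boundary jonas lemma}/\ref{prop:boundary implies bilip} and \eqref{eq:bwgl boundary}, noting that the hypothesis $\hau^{N-1}(\partial\X\cap B_{1/4}(x))\ge v$ needed in Proposition \ref{prop:boundary jonas lemma} is automatic here from the assumed Ahlfors $k$-regularity of the boundary.

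The main obstacle is the bookkeeping in the iteration: turning a ``big piece of good points at good scales in every ball'' statement into a genuine Carleson measure estimate requires care, because the good set produced by Proposition \ref{prop:flat implies bilip} lives only in a \emph{fixed-proportion-smaller} concentric ball $B_{r'/(12k)}(x')$ and only controls scales $s\lesssim r'$, so one must set up a stopping-time decomposition of the Carleson box into ``good'' sub-boxes (where we keep the gain) and ``transition'' regions of controlled total mass, and verify that the loss at each refinement is by a fixed factor $<1$ uniformly in the starting ball. This is a by-now-standard argument (it is essentially the ``$BP(\cdot)\Rightarrow$ Carleson'' lemma of David--Semmes, see \cite{david-semmes93}, and is the same scheme by which BWGL is derived in \cite{BHS23}); I would either cite that lemma directly in the metric-measure formulation or include a short self-contained induction on dyadic generations. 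Everything else — the reduction of scales, absorbing the large-$R$ range, and the passage between the two cases — is immediate from the results already established above.
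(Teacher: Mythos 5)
Your proposal follows essentially the same route as the paper: reduce by scaling to small scales and $K=-\delta$ (absorbing the large-$R$ range using boundedness and Ahlfors regularity), combine Proposition \ref{prop:jonas lemma} with Proposition \ref{prop:flat implies bilip}\ref{it:U BWGL} (resp.\ Propositions \ref{prop:boundary jonas lemma} and \ref{prop:boundary implies bilip} for the boundary case) to find in every ball a set of proportional measure whose points are $\eps$-flat at all smaller scales, and then convert this into the Carleson estimate via the standard David--Semmes/John--Nirenberg iteration, which is exactly the step the paper delegates to \cite[Lemma IV.1.2]{david-semmes93} together with the dyadic lattice and quasi-monotonicity facts from \cite{BHS23}.
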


\begin{proof}
Let $(X,d,\mm)$ be a bounded Ahlfors $k$-regular m.m.s., with $k \in \nn$ and Ahlfors regularity constant $C$, and being  an  $\RCD(K,N)$ space with $N<\infty$ (resp.\ the boundary of a non-collapsed $\RCD(K,k+1)$ ambient space $(Y, \sfd_Y,\hau^{k+1})$).
    Let $\eps > 0$ and let $\delta=\delta(N,\eps,C)$  (resp.\ $\delta=\delta(\eps,k+1)$)  be the constant  given by Proposition \ref{prop:flat implies bilip} (resp.\ by Proposition \ref{prop:boundary implies bilip}). We claim, for all $x_0 \in X$ and $0 < R < \min\{\diam(X),\sqrt{\delta/|K|}\},$ that
        \begin{equation}\label{e:BWGL-delta}
        \int_0^R \mathcal H^k\big(\{y \in B_{R}(x_0) \ : \sfd_{GH}(B_s(x),B_s^{\rr^k}(0))>\eps s\}\big) \frac{\d s}{s}\le \delta^{-1}R^k.
        \end{equation}
    Since $X$ is bounded and Ahlfors $k$-regular, this proves \eqref{e:BWGL-carleson} for all $x_0 \in X$ and $0 < R < \diam(X)$ at the cost of increasing the constant on the right-hand side depending on $K$, $\diam(\X)$ and $C$. 
    
    Fix now $x_0 \in X$ and $0 < R < \min\{\diam(X),\sqrt{\delta/|K|}\}$. We may assume that $R = 1$ and $K = -\delta.$ Otherwise we can just consider the rescaled space $(\X,R^{-1}\sfd,\mea)$ (resp.\ the rescaled ambient space $(Y, R^{-1}\sfd_Y,\hau^{k+1})$)  which is an $\rcd(-\delta,N)$ space (resp.\ an $\rcd(-\delta,k+1)$ space), thanks to the assumption $R<\sqrt{\delta/|K|}$ (recall \eqref{eq:rcd scaling}). Using (the scaled versions of) Proposition \ref{prop:jonas lemma} and Proposition \ref{prop:flat implies bilip} (resp.\ Proposition \ref{prop:boundary jonas lemma} and Proposition \ref{prop:boundary implies bilip}), for every ball $B_r(x)$ with  $x\in B_1(x_0)$ and $r<1$, there exists a  $U\subset B_r(x)$ satisfying  \eqref{e:U bilip} (resp.\ \eqref{eq:bwgl boundary}) and $\mm(U)\ge c \mm(B_r(x))$, where $c>0$ is a constant independent of $r$ and $x$. We obtain \eqref{e:BWGL-delta} from this by a simple application of the John-Nirenberg lemma (see \cite[Lemma IV.1.2]{david-semmes93}). The lemma in \cite{david-semmes93} is stated for Euclidean space, but the argument translates verbatim to general Ahlfors regular metric measure spaces. Applying the John-Nirenberg lemma requires a dyadic lattice to be in place, and a form of quasi monotonicity, with respect to balls, of the local Gromov-Hausdorff distance in \eqref{e:BWGL-delta}. Both are available, see \cite[Section 2.6]{BHS23} and \cite[eq. (2.2.3)]{BHS23}, respectively. For the sake of brevity we omit the detailed proof, leaving the details to the reader.
\end{proof}

On these same spaces, as a corollary to Theorem \ref{thm:rcd ur bounded} and \cite[Theorem 4.1.1]{BHS23}, we obtain a result on quantitative differentiability of Lipschitz functions. To state this we need some notation. 	
\begin{definition}
	Let $(X,\sfd)$ a metric space, $L \geq 1$ and $f \colon X \to \R.$ For $x \in X$, $0 < r < \diam(X)$ and $u \colon B_r(x) \to B^{\R^k}_r(0)$ define 
	\begin{align}
		\zeta(x,r,u) &= \frac{1}{r} \sup_{y,z \in B_r(x)} \left| \sfd(y,z) - | u(y) - u(z) | \right| \\
		\eta(x,r,u) &= \frac{1}{r} \sup_{y \in B^{\R^k}_r(0)} \dist(y, u(B_r(x)) ) \\
		\Omega^L_f(x,r,u) &= \frac{1}{r} \inf_A \sup_{y \in B_r(x)} | f(y) - A(u(y)) |, 
	\end{align}
	where the infimum in the final equation is taken over all affine maps $A \colon \R^k \to \R$ with $\text{Lip}(A) \leq L$. Finally, let 
	\begin{align}
		\gamma^L_f(x,r) = \inf_{u} \left[ \zeta(x,r,u) + \eta(x,r,u) + \Omega^L_f(x,r,u) \right], 
	\end{align}
	where the infimum is taken over all maps $u \colon B_r(x) \to B^{\R^k}_r(0).$ 
\end{definition}

\begin{remark}\label{r:approx}
	If $\gamma_f^L(x,r) < \eps,$ then there exist a map $u \colon B_r(x) \to B^{\R^k}_r(0)$ which is an $\eps r$-GH isometry and a $L$-Lipschitz affine map on $u(B_r(x))$ well-approximates $f$ up to an error $\eps r.$ Since $u$ is an $\eps r$-GH isometry, one may like to think of $A$ as being approximately an ``affine map on $B_r(x)$''.
\end{remark}

\begin{remark}\label{r:gamma-tilde}
	The above definition corresponds to \cite[Definitions 3.1.3 and 3.1.6]{BHS23}. There, one considers maps $u$ taking values in $B^{(\R^k,\|\cdot\|)}_r(0)$ for some norm $\|\cdot\|$ and a further infimum is taken over all norms. This comes from the fact that arbitrary UR metric spaces may possess non-Euclidean tangents. Denoting by $\tilde{\gamma}^L_f(x,r)$ the coefficient from \cite{BHS23}, then $\tilde{\gamma}^L_f(x,r) < \eps$ means there exists a norm $\|\cdot\|$ on $\R^k,$ a map $u \colon B_r(x) \to B^{(\R^k,\|\cdot\|)}_r(0)$ which is an $\eps r$-GH isometry and a $L$-Lipschitz affine map $A \colon (\R^k,\|\cdot\|) \to \R$ which approximates $f$ up to an $\eps r$ error. 
\end{remark}

The following corollary states that the type of approximation described in Remark \ref{r:approx} occurs at \textit{most} locations and scales. 

\begin{cor}\label{c:quant-diff}
Let $(X,d,\mm)$ be a bounded Ahlfors $k$-regular m.m.s., $k \in \nn,$ being  either an  $\RCD(K,N)$ space with $N<\infty$ or the boundary of a non-collapsed $\RCD(K,k+1)$ space (endowed with the restriction distance and  $\hau^k$-measure). Then, there exists $L \geq 1$ and for each $\eps > 0$ a constant $C(\eps) > 1$ such that if $f \colon X \to \R$ is $1$-Lipschitz then 
	\begin{align}\label{e:gamma-carleson}
		\int_0^R \mathcal H^k( \{x \in B_R(x_0) : \gamma_f^L(x,r) > \eps \}) \, \frac{dr}{r} \leq C(\eps) R^k 
	\end{align}
	for all $x_0 \in X$ and $0 < R < \emph{diam}(X).$ 
\end{cor}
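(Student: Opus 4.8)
The plan is to deduce Corollary \ref{c:quant-diff} from the corresponding statement in \cite{BHS23}, namely \cite[Theorem 4.1.1]{BHS23}, which is exactly a Carleson-type estimate of the form \eqref{e:gamma-carleson} but with the coefficient $\tilde\gamma_f^L$ from Remark \ref{r:gamma-tilde} in place of $\gamma_f^L$. Indeed, by Theorem \ref{thm:rcd ur bounded} (resp.\ Theorem \ref{thm:UR boundary}) the space $X$ is UR, hence \cite[Theorem 4.1.1]{BHS23} applies and gives the existence of $L\ge 1$ and, for each $\eps>0$, a constant $C(\eps)>1$ such that $\int_0^R \mathcal H^k(\{x\in B_R(x_0): \tilde\gamma_f^L(x,r)>\eps/2\})\,\tfrac{\d r}{r}\le C(\eps)R^k$. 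So the only thing to prove is that, on \emph{our} spaces, the Euclidean coefficient $\gamma_f^L$ is controlled by the Banach-valued coefficient $\tilde\gamma_f^L$ at comparable parameters, up to passing through the almost-flatness we already have.

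The key point is that on an Ahlfors $k$-regular $\RCD$ space (resp.\ the boundary of a non-collapsed $\RCD$ space) the relevant tangents are Euclidean, so the extra infimum over norms in $\tilde\gamma_f^L$ buys nothing at most scales. Concretely, I would argue as follows. Fix $\eps>0$. First, combine Proposition \ref{prop:BWGL} (applied with parameter $\eps'$ to be chosen) with the triangle inequality for GH distance: if $\sfd_{GH}(B_r^X(x),B_r^{\rr^k}(0))\le \eps' r$ and also $\tilde\gamma_f^L(x,r)<\eps'$, then the norm $\|\cdot\|$ appearing in the definition of $\tilde\gamma_f^L$ must be $(1+C\eps')$-biLipschitz to the Euclidean norm on $\R^k$ (since $B_r^X(x)$ is simultaneously $\eps' r$-GH close to the Euclidean ball and to the $\|\cdot\|$-ball, and a ball determines the norm up to this error — this is where boundedness/nondegeneracy of $\dim$ enters). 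Post-composing the $\tilde\gamma_f^L$-optimal map $u$ with the identity $(\R^k,\|\cdot\|)\to(\R^k,|\cdot|)$ then yields a competitor for $\gamma_f^L(x,r)$ with $\zeta,\eta,\Omega$ all bounded by $C\eps'$; hence $\gamma_f^L(x,r)\le C\eps'<\eps$ provided $\eps'$ is chosen small enough relative to $\eps$. Therefore
\[
\{x\in B_R(x_0): \gamma_f^L(x,r)>\eps\}\ \subseteq\ \{x: \sfd_{GH}(B_r^X(x),B_r^{\rr^k}(0))>\eps' r\}\ \cup\ \{x: \tilde\gamma_f^L(x,r)>\eps'\},
\]
and integrating $\tfrac{\d r}{r}$ over $(0,R)$ and bounding each piece by Proposition \ref{prop:BWGL} and \cite[Theorem 4.1.1]{BHS23} respectively gives \eqref{e:gamma-carleson} with a constant $C(\eps)$ (absorbing the dependence on $K$, $\diam(X)$ and the Ahlfors constant as in the proof of Proposition \ref{prop:BWGL}).

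The main obstacle I anticipate is the bookkeeping in the comparison $\gamma_f^L\lesssim \tilde\gamma_f^L$: one has to be a little careful that the affine map $A$ in the definition of $\tilde\gamma_f^L$, which is $L$-Lipschitz with respect to $\|\cdot\|$, becomes only $L(1+C\eps')$-Lipschitz after the norm change, so a priori the Lipschitz constant degrades; this is harmless because \eqref{e:gamma-carleson} allows $L$ to be any fixed constant (not the sharp one), so one simply fixes $L$ slightly larger than the constant produced by \cite[Theorem 4.1.1]{BHS23} from the outset, or equivalently runs the $\tilde\gamma$-estimate at level $L/2$. A second, more minor point is ensuring that the ``a ball GH-close to two normed balls forces the norms to be close'' step is quantitatively correct — this follows from the fact that an $\eta r$-GH isometry between $B_r^{(\R^k,\|\cdot\|_1)}(0)$ and $B_r^{(\R^k,\|\cdot\|_2)}(0)$ composed with linearization (using that both are convex, homogeneous, and $k$ is fixed) forces $\|\cdot\|_1$ and $\|\cdot\|_2$ to be $(1+C_k\eta)$-equivalent, which is standard. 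Modulo these two routine verifications, the corollary is immediate from Theorem \ref{thm:rcd ur bounded}, Proposition \ref{prop:BWGL}, and \cite[Theorem 4.1.1]{BHS23}.
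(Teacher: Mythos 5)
Your overall architecture is the same as the paper's: use Theorem \ref{thm:rcd ur bounded} (resp.\ Theorem \ref{thm:UR boundary}) to invoke \cite[Theorem 4.1.1]{BHS23} for the Banach-valued coefficient $\tilde\gamma_f^{L'}$, use Proposition \ref{prop:BWGL} to control the set where the ball is not GH-close to $B_r^{\R^k}(0)$, prove a pointwise implication ``flat $+$ $\tilde\gamma$ small $\Rightarrow$ $\gamma$ small'' with a doubled Lipschitz constant, and conclude by a union bound on the bad sets. However, the pointwise implication as you state it contains a genuine error. From $\sfd_{GH}(B_r(x),B_r^{\R^k}(0))\le\eps' r$ and $\tilde\gamma_f^{L}(x,r)<\eps'$ you conclude that the norm $\|\cdot\|$ realizing $\tilde\gamma$ is $(1+C\eps')$-equivalent to the Euclidean norm, so that post-composing the optimal map with the \emph{identity} $(\R^k,\|\cdot\|)\to(\R^k,|\cdot|)$ gives a competitor for $\gamma_f^L$. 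This is false: GH-closeness of $B_r^{(\R^k,\|\cdot\|)}(0)$ to $B_r^{\R^k}(0)$ pins the norm down only up to a linear isomorphism, not pointwise. For instance, every inner-product norm $\|y\|=|My|$, $M\in GL_k$, has metric balls \emph{isometric} to Euclidean balls (via $y\mapsto My$), while the identity map is only $\max(\|M\|,\|M^{-1}\|)$-bi-Lipschitz, which can be arbitrarily large. Consequently, with $u=\mathrm{id}\circ\tilde u$ the quantities $\zeta(x,r,u)$ and the Lipschitz constant of the affine map (measured now in the Euclidean norm) are not controlled by $C\eps'$ at all — the degradation is unbounded, not the harmless $(1+C\eps')$ factor you anticipate.

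The fix is precisely what the paper does: instead of the identity, take the $2\delta r$-GH isometry $\varphi\colon B_r^{(\R^k,\|\cdot\|)}(0)\to B_r^{\R^k}(0)$ furnished by the two closeness assumptions, and use \cite[Lemma 2.3.14]{BHS23} to produce a $(1+\eps)$-bi-Lipschitz \emph{affine} map $T$ uniformly close to $\varphi$; then set $u\coloneqq\varphi\circ\tilde u$ and $A\coloneqq\tilde A\circ T^{-1}$. With this choice $u$ is an $\eps r/2$-GH isometry, $A$ is $2L'$-Lipschitz on $(\R^k,|\cdot|)$ (so $L=2L'$ works, as in your last remark), and $\Omega_f^L(x,r,u)$ is controlled by the $\tilde\gamma$ error plus $L\sup|T-\varphi|$. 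So your reduction scheme is sound and matches the paper, but the ``norms must be pointwise close, use the identity'' step must be replaced by this affine linearization of the GH isometry; as written, that step would fail.
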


\begin{proof}
  
    Let $\tilde{\gamma}_f^L$ denote the coefficients from \cite[Definition 3.1.6]{BHS23}. By Theorem \ref{thm:rcd ur bounded}, $X$ is uniformly $k$-rectifiable, so, by \cite[Theorem 4.1.1]{BHS23}, there exists $L' \geq 1$ and for each $\delta > 0$ a constant $C(\delta) > 1$ such that \eqref{e:gamma-carleson} holds with the coefficients $\tilde{\gamma}_f^{L'}$ and constant $C(\delta)$. This and Proposition \ref{prop:BWGL} imply \eqref{e:gamma-carleson} for the coefficient $\gamma_f^L$ once we establish the following claim: Let $L = 2L'.$ For each $\eps > 0$ there exists $\delta > 0$ such that if
	\begin{align}
		\mathscr{G}_\delta \coloneqq \{ (x,r) \in X \times (0,\diam(X)) : \sfd_{GH}(B_r(x),B^{\R^k}_r(0)) \leq \delta r \mbox{ and } \tilde{\gamma}^{L'}_f(x,r) \leq \delta   \}
	\end{align}
	then 
	\begin{align}
		\mathscr{G}_\delta \subseteq \{(x,r) \in X  \times (0,\diam(X)) : \gamma_f^L(x,r) \leq \eps \}. 
	\end{align}
	We now prove the claim. Let $\delta,\eps > 0$ and let $(x,r) \in \mathscr{G}_\delta.$ By Remark \ref{r:gamma-tilde}, there exists a norm $\|\cdot\|$, a $\delta r$-GH isometry $\tilde{u} \colon B_r(x) \to B^{(\R^k,\|\cdot\|)}_r(0)$ and an affine function $\tilde{A} \colon \R^k \to \R$ such that $\text{Lip}(\tilde{A}) \leq L'$ (as a map from $(\R^k,\|\cdot\|)$ to $\R$) and 
\begin{align}\label{e:opt-A}
		\sup_{y \in B_r(x)} | f(y) - \tilde{A}(\tilde{u}(y)) |\leq r \delta  . 
	\end{align} 
	The first part, combined with the definition of $\mathscr{G}_\delta$, implies $\sfd_{GH}(B_r^{\R^k}(0),B_r^{(\R^k,\|\cdot\|)}) < 2\delta r$, hence, there exists a $2\delta r$-GH isometry $\varphi \colon B_r^{(\R^k,\|\cdot\|)}(0) \to B^{\R^k}_r(0)$. By \cite[Lemma 2.3.14]{BHS23}, for $\delta$ small enough depending on $\eps$ and $L$, there exists a $(1+\eps)$-bi-Lipschitz affine map $T \colon B_r^{(\R^k,\|\cdot\|)}(0) \to B^{\R^k}_r(0)$ such that 
	\begin{align}\label{e:lin-T}
		\sup_{y \in B^{(\R^k,\|\cdot\|)}_r(0)} | T(y)-\varphi(y)  | \leq \frac{\eps r}{4L}. 
	\end{align}
	Define 
	\begin{align}
		u \coloneqq \varphi \circ \tilde{u} \colon B_r(x) \to B^{\R^k}_r(0) \mbox{ and } A \coloneqq \tilde{A} \circ T^{-1} \colon \R^k \to \R. 
	\end{align}
	For $\delta$ small enough, since $\tilde{u},\varphi$ are $2\delta r$-GH isometries, it follows that $u$ is an $\eps r/2$-GH isometry. In particular, we have 
	\begin{align}\label{e:zeta-eta}
		r\zeta(x,r,u) + r\eta(x,r,u) \leq \eps r/2.
	\end{align}
	Since $0 < \eps < 1$ and using the fact that $\tilde{A}$ is $L'$-Lipschitz and $T^{-1}$ is $(1+\eps)$-Lipschitz, we have $\text{Lip}(A) \leq 2L' = L$ (viewed as a map from $\R^k$ to $\R$). Combining this with \eqref{e:opt-A} and \eqref{e:lin-T} gives 
	\begin{align}\label{e:omega}
		r \Omega_f^L(x,r,u) &\leq \sup_{y \in B_r(x)}	| f(y) - A(u(y)) | \\
		&\leq\sup_{y \in B_r(x)} \left[ | f(y) - \tilde{A} ( \tilde{u}(y) ) | + | A \circ T \circ \tilde{u}(y) - A \circ \varphi \circ \tilde{u}(y) | \right] \\
		&\le \delta r + L \sup_{y \in B_r^{\R^k}(0)} | T(\tilde{u}(y)) - \varphi(\tilde{u}(y)) |\leq \eps r/2,
	\end{align}
 provided $\delta \le \eps/4.$
	Equations \eqref{e:zeta-eta} and \eqref{e:omega} finish the proof of the claim. 
\end{proof}

\def\cprime{$'$}

\end{document}